\newtheoremstyle{mytheorem}{4pt}{4pt}{\itshape}{}{\scshape}{.}{.5em}{}
\theoremstyle{mytheorem}
\newtheorem{theorem}{Theorem}
\newtheorem{lemma}{Lemma}
\newtheorem{corollary}{Corollary}
\newtheorem{definition}{Definition}
\newtheorem{remark}{Remark}
\newcommand*{\R}{\ensuremath{\mathbb{R}}}
\renewcommand*{\P}{\ensuremath{\mathcal{P}}}
\newcommand*{\T}{\ensuremath{\mathbb{T}}}
\newcommand*{\N}{\ensuremath{\mathbb{N}}}
\newcommand*{\Q}{\ensuremath{\mathbb{Q}}}
\newcommand{\Z}{\ensuremath{\mathbb{Z}}}
\newcommand*{\supp}{\ensuremath{\mathrm{supp\,}}}
\newcommand*{\rank}{\ensuremath{\mathrm{rank\,}}}
\newcommand*{\cof}{\ensuremath{\mathrm{cof\,}}}
\renewcommand*{\d}{\ensuremath{\text{\,d}}}
\newcommand{\n}{\ensuremath{\mathbf{n}}}
\renewcommand{\a}{\ensuremath{\mathbf{a}}}
\begin{document}

\title{Laminates supported on cubes}

\author[1]{Gabriella Sebesty\'en}
\author[2]{ L\'aszl\'o Sz\'ekelyhidi Jr.}

\affil[1]{E\"otv\"os Lor\'ant University, Budapest}

\affil[2]{Universit\"at Leipzig}


\maketitle

\begin{abstract}
In this paper we study the relationship between rank-one convexity and quasiconvexity in the space of $2\times 2$ matrices.
We show that a certain procedure for constructing homogeneous gradient Young measures from periodic deformations, that arises from V.~\v Sver\'ak's celebrated counterexample in higher dimensions, always yields laminates in the $2\times 2$ case. 
\end{abstract}


\section{Introduction}

A continuous function $f:\R^{m\times n}\to\R$ is said to be {\it quasiconvex} if 
\begin{equation}\label{e:qc}
\int_{\T^n}f\bigl( A+Du(x)\bigr)\,dx\geq f(A)
\end{equation}
for any matrix $A\in \R^{m\times n}$ and any Lipschitz function $u:\R^n\to\R^m$ which is periodic with respect to the lattice $\Z^n$. Here $\T^n$ is the unit cube of $\R^n$. Equivalently, quasiconvexity may be defined by using smooth periodic or smooth compactly supported test functions $u\in C_c^{\infty}(\Omega;\R^m)$ for any bounded domain with Lipschitz boundary (see \cite{Morrey:1952we,Sverak:1992jp,Muller:1996wb}). It is well known \cite{Morrey:1952we} that quasiconvexity 
of $f$ is equivalent to the weak star lower-semicontinuity of the functional $u\mapsto \int_{\Omega}f\bigl( Du(x)\bigr)\,dx$ in $W^{1,\infty}(\Omega;\R^m)$.

Because of its fundamental importance in the calculus of variations, it is of interest to find necessary and sufficient conditions for quasiconvexity. The most well-known necessary condition is the rank-one convexity of $f$, namely that for any $A,B\in\R^{m\times n}$ with $\rank(B)=1$ 
$$
t\mapsto f(A+tB)\quad\textrm{ is convex.}
$$
This arises by using test functions of the form
\begin{equation}\label{e:test1}
u(x)=\a h(x\cdot \n),
\end{equation}
where $\a\in \R^m$, $\n\in \Z^n$ and $h:\R\to\R$ is the 1-periodic extension of the saw-tooth function
$$
h(t)=\begin{cases} t& \textrm{ for }0\leq t\leq 1/2,\\ 1-t& \textrm{ for } 1/2\leq t\leq 1.\end{cases}
$$
Indeed, by direct calculation 
$$
Du(x)=h'(x\cdot \n) \a\otimes \n\quad\textrm{ for a.e. $x$}
$$
and 
$$
\int_{\T^n}f\bigl( A+Du(x)\bigr)\,dx=\frac{1}{2}f(A+\a\otimes \n)+\frac{1}{2}f(A-\a\otimes \n).
$$
Thus, \eqref{e:qc} implies that
\begin{equation}\label{e:rc}
t\mapsto f(A+t\a\otimes \n)\textrm{ is convex}
\end{equation}
for any $\a\in R^m$ and $\n\in \Z^n$. Since for any $\n\in\Q^n$ there exists $\lambda\neq 0$ such that $\lambda\n\in\Z^n$, by using that $\a\otimes\n=(\tfrac{1}{\lambda}\a)\otimes (\lambda\n)$, one can easily extend \eqref{e:rc} to all $\n\in \Q^n$, and then, using the continuity of $f$, to all $\n\in \R^n$. Thus $f$ is rank-one convex. 

The question whether the converse implication holds, i.e. whether rank-one convexity is also sufficient for quasiconvexity, has attracted a lot of attention since Morrey's seminal paper \cite{Morrey:1952we}, not only because of the relevance to the calculus of variations, but also because of surprising and deep connections to other areas \cite{AlBaernstein:1999wx}. In the case where $m\geq 3$, V.~\v Sver\'ak constructed in \cite{Sverak:1992cl} an ingenious example showing that rank-one convexity is not sufficient for quasiconvexity. The case $m=n=2$, however, remains wide open. Indeed, there is evidence that for this case rank-one convexity might be sufficient after all \cite{Pedregal:1996di,Pedregal:1998vt,AlBaernstein:1999wx,Faraco:2008vo,Astala:2012ex}. 

Returning to necessary conditions for quasiconvexity, consider now test functions of the form
\begin{equation}\label{e:test2}
u(x)=\sum_{i=1}^N\a_i h(x\cdot \n_i+c_i),
\end{equation}
where $\a_i\in \R^m$, $\n_i\in \Z^2$ and $c_i\in\R$. As above, for a.e. $x\in\T^n$ 
$$
Du(x)=\sum_{i=1}^Nh'(x\cdot \n_i+c_i)\,\a_i\otimes \n_i=\sum_{i=1}^N\epsilon_i(x)\, \a_i\otimes \n_i,
$$
where $\epsilon_i(x)\in\{-1,+1\}$ for each $i$. The set of possible values of $Du(x)$ are precisely the $2^N$ vertices of a {\it rank-one hypercube}, i.e.~an $N$-dimensional cube immersed in $\R^{m\times n}$, whose sides are given by the rank-one matrices  
$C_i:=\a_i\otimes \n_i$, $i=1,\dots,N$. Let us denote the vertices by 
$$
X_{\epsilon}=\sum_{i=1}^N\epsilon_iC_i,\quad \epsilon\in\{-1,+1\}^N.
$$
The integral in \eqref{e:qc} defines a probability measure $\nu$ supported on the vertices of this hypercube - in fact $\nu$ is a homogeneous gradient Young measure, see \cite{Kinderlehrer:1991vb,Sverak:1992jp,Muller:1996wb}, with barycenter $0$. More precisely,
\begin{equation}\label{e:nu}
\int_{\T^n}f\bigl(Du(x)\bigr)\,dx=\sum_{\epsilon\in\{-1,+1\}^N}\nu_\epsilon f(X_{\epsilon}),
\end{equation}
so that a consequence of the quasiconvexity of $f$ would be the inequality
\begin{equation}\label{e:qc-test}
\sum_{\epsilon\in\{-1,+1\}^N}\nu_\epsilon f(X_{\epsilon})\geq f(0).
\end{equation}
Our aim in this paper is to analyse in more detail this inequality in the case $n=m=2$.
As a first observation, note that the weights $\nu_{\epsilon}$ are determined by the volume fractions in $\T^n$ where the functions  
$h'(x\cdot \n_i+c_i)$ each take the value $\pm 1$ respectively. In particular it does not depend on the choices of the vectors $\a_i$. 

It turns out that with $N=2$ nothing more is gained from \eqref{e:test2} with respect to \eqref{e:test1} - see Lemma \ref{l:N=2} below. Indeed, with $N=2$ the inequality \eqref{e:qc-test} becomes 
$$
\frac{1}{4}\Bigl(f(X_{++})+f(X_{+-})+f(X_{-+})+f(X_{--})\Bigr)\geq f(0),
$$
which is clearly satisfied by all rank-one convex functions.
The situation, however, becomes much more interesting if $N\geq 3$. Indeed, 
the example of \v Sver\'ak can be understood, following R.~James' modification (see Section 4.7 in \cite{Muller:1996wb}), precisely in this way. To this end let 
$N=3$, and set $\n_1=(1,0)$, $\n_2=(0,1)$, $\n_3=(1,1)$ and the phases are $c_1=c_2=0$ and $c_3=1/4$. A direct calculation (see e.g. \cite{Muller:1996wb}) easily shows that the measure $\nu$ in \eqref{e:nu} is given in this case by
\begin{equation}\label{e:nu1-3}
\begin{split}
\nu_{+++}=\nu_{+--}=\nu_{-+-}=\nu_{--+}&=1/16,\\
\nu_{---}=\nu_{++-}=\nu_{+-+}=\nu_{-++}&=3/16.
\end{split}
\end{equation}
Whether any rank-one convex function satisfies the corresponding inequality \eqref{e:qc-test} now requires specific knowledge of the vectors $\a_i$. Indeed, for the $3\times 2$ case, where $\a_1=(1,0,0)$, $\a_2=(0,1,0)$ and $\a_3=(0,0,1)$, there exist rank-one convex functions which do not satisfy \eqref{e:qc-test} (\cite{Sverak:1992cl}). 

On the other hand, the same example does not work in the $2\times 2$ case: if $\a_1=(1,0)$, $\a_2=(0,1)$ and $\a_3=(1,1)$, it was shown by P.~Pedregal in \cite{Pedregal:1996di} (see also \cite{Pedregal:1997vs,Pedregal:1998vt}) that every rank-one convex function satisfies \eqref{e:qc-test} - in other words the measure $\nu$ in \eqref{e:nu1-3} is a laminate (see \cite{Pedregal:1993wu} and below for definitions). It was recently suggested in \cite{Pedregal:2014up} that for better choices of $\a_i\in\R^2$ the measure $\nu$ might not be a laminate. Our main result in this paper is to show that this is not the case:

\begin{theorem}\label{t:main}
Given any $C_1,C_2,C_3$ rank-1 matrices in $\R^{2\times 2}$, for any rank-one convex function $f:\R^{2\times 2}\to\R$ we have
\begin{equation}\label{e:main}
\begin{split}
f(0)\leq&\frac{1}{16}\bigl(f(X_{+++})+f(X_{+--})+f(X_{-+-})+f(X_{--+})\bigr)+\\
&\frac{3}{16}\bigl(f(X_{---})+f(X_{++-})+f(X_{+-+})+f(X_{-++})\bigr),
\end{split}
\end{equation}
where $X_{\pm\pm\pm}$ denotes the matrix $\pm C_1\pm C_2\pm C_3$.
\end{theorem}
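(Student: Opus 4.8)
The plan is to prove the stronger statement that the probability measure $\nu=\sum_{\epsilon}\nu_\epsilon\,\delta_{X_\epsilon}$, with the weights $\nu_\epsilon$ of \eqref{e:nu1-3}, is a \emph{laminate}; inequality \eqref{e:main} is then exactly Jensen's inequality for $\nu$ tested against the rank-one convex $f$, which every laminate satisfies. Since the eight matrices $X_\epsilon$ lie in $W:=\mathrm{span}(C_1,C_2,C_3)\subseteq\M$, and since the rank-one structure inside $W$ is governed by the single quadratic form $Q:=\det|_W$ (a vector $v\in W$ is rank one iff $\det v=0$), it suffices to build a laminate from $\delta_0$ to $\nu$ using only rank-one directions $v$ with $\det v=0$ (auxiliary matrices being allowed along the way). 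Writing elements of $W$ in the basis $C_1,C_2,C_3$ one has $Q(z)=\alpha z_1z_2+\beta z_1z_3+\gamma z_2z_3$ with $\alpha=\det(C_1+C_2)$, $\beta=\det(C_1+C_3)$, $\gamma=\det(C_2+C_3)$, so the problem depends only on $(\alpha,\beta,\gamma)$. I would dispatch the degenerate case $\dim W\le 2$ at the start, and otherwise use the symmetries of $\nu$ — permutations of the $C_i$ permute $(\alpha,\beta,\gamma)$, and reversing the signs of two of the $C_i$ relabels the vertices by a parity-preserving sign change, hence fixes $\nu$ while flipping two of the signs of $\alpha,\beta,\gamma$ — to reduce to the case where $\alpha,\beta,\gamma\ge 0$, or exactly one is negative, and they are ordered.

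Next I would record the constraint from the fact that $t\mapsto\det(A+tB)$ is affine, so $\det$ is a martingale along any lamination. The values of $\det$ on the vertices are the four numbers $q_1=\alpha+\beta+\gamma$, $q_2=\alpha-\beta-\gamma$, $q_3=-\alpha+\beta-\gamma$, $q_4=-\alpha-\beta+\gamma$ (equal on antipodal vertices), with $\sum_k q_k=0$; thus $\langle\nu,\det\rangle=\tfrac14\sum_k q_k=0=\det(0)$, and in any first splitting $\delta_0\to t\,\delta_{R_1}+(1-t)\,\delta_{R_2}$ the matrix $R_1$ must lie on the cone $\{\det=0\}$ and each of the two pieces of $\nu$ must again integrate $\det$ to $0$. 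In particular the naive first splitting along a direction $C_i$ is excluded: the two ``halves'' of the cube violate the $\det$-Jensen inequality whenever the corresponding coefficient is nonzero. Hence the first splitting has to regroup the masses of the vertices — using the freedom in how the weight of each antipodal pair is distributed between its two vertices — so that both pieces are $\det$-neutral.

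The heart of the argument is then the explicit construction: produce a first rank-one splitting $\delta_0\to t\,\delta_{R_1}+(1-t)\,\delta_{R_2}$ with $\det R_1=0$ together with a distribution of the vertex masses to two measures $\nu_1,\nu_2$ of barycenters $R_1,R_2$ satisfying $\langle\nu_i,\det\rangle=0$, and then laminate $\nu_1$ and $\nu_2$ recursively, introducing further auxiliary matrices at each node, until after a bounded number of steps the atoms are precisely the eight $X_\epsilon$ with the weights \eqref{e:nu1-3}. At each node one checks one determinant identity ($\det$ of the difference of the two children equals $0$) and one barycenter identity; in the coordinates $(z_1,z_2,z_3)$ these are short computations, to be carried out uniformly after the normalization above, with a small case split according to which of $\alpha,\beta,\gamma$ vanish. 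In the degenerate configurations ($\dim W\le 2$, or $\alpha\beta\gamma=0$) extra rank-one connections among the vertices appear and a low-order laminate can be written down directly.

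The hard part will be choosing the splitting scheme, and the obstruction is structural rather than computational: every splitting that respects the combinatorics of the cube — the bipartition by $\epsilon_1\epsilon_2\epsilon_3$, or the splitting by any single coordinate $\epsilon_i$ — yields a piece on which $\det$-Jensen fails unless one of $\alpha,\beta,\gamma$ is zero, so the auxiliary matrices and the tree of splittings must genuinely be adapted to the triple $(\alpha,\beta,\gamma)$ and cannot be read off from the cube alone. Exhibiting a scheme that nonetheless terminates exactly at the eight vertices with the James weights, and verifying that it does so for every admissible $(\alpha,\beta,\gamma)$, is the crux; this is where the restriction $m=n=2$ is essential, through the rank-one affineness of $\det$ and the fact that $\det$ is never semidefinite on a three-dimensional subspace of $\M$.
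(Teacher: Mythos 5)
Your reduction of the problem is sound: passing to coordinates in $\mathrm{span}\{C_1,C_2,C_3\}$, observing that everything depends only on the quadratic form $\det(z)=a z_1z_2+b z_1z_3+c z_2z_3$, normalizing signs, dispatching the degenerate cases, and noting that the rank-one affineness of $\det$ forces every admissible splitting tree to be $\det$-neutral at each node (so that, in particular, the naive splittings along $C_1,C_2,C_3$ or along the parity bipartition are ruled out when $abc\neq 0$) all match the setup of the paper. But the proposal stops exactly where the theorem begins: you state that the crux is ``exhibiting a scheme that nonetheless terminates exactly at the eight vertices with the James weights, and verifying that it does so for every admissible $(\alpha,\beta,\gamma)$,'' and you do not exhibit such a scheme, nor give any argument that one exists. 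Since everything before that point consists of necessary conditions and symmetry reductions, the actual content of Theorem \ref{t:main} --- the existence of the lamination --- is missing; as a proof the proposal has a genuine gap.

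It may help to see how the paper fills this gap, because it does so by a route you did not anticipate: it never laminates the measure \eqref{e:nu1-3} directly by a hand-built tree. Instead it constructs, for each admissible $(a,b,c)$, symmetric laminates (in the sense of Definition \ref{d:symmetric}, using Lemma \ref{l:symmetric} to certify symmetry from barycenter and $\det$ constraints alone) whose ratio $\alpha/\beta$ is at least as extreme as the target value $1/3$, and then uses convexity of the set of symmetric laminates with barycenter $0$ (interpolating with the uniform measure $\alpha=\beta=1/8$, which is trivially a laminate) to hit the ratio $1/3$ exactly. The extremal laminates come from a geometric fact (Lemma \ref{lem:P}): under the sign conditions there is a point $P$ on the rank-one edge $[-X_0,-X_1]$ such that $0$ lies in the lamination-convex hull of $\{X_0,X_1,X_2,X_3,P\}$; proving this requires the doubly ruled surfaces of Lemma \ref{L:quadraticsurface}, the observation that their union is a topological sphere, and a check that the origin never lies on that sphere for admissible parameters. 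A further case distinction ($\det X_i<0$ for all $i$ versus $\det X_2>0$, say) and the harmonic--arithmetic mean inequality give the ratio bound $\nu(-X_0)\geq 3\nu(-X_k)$. None of this is a routine verification of barycenter and determinant identities along a guessed tree, which is why simply announcing that such a tree exists does not suffice; if you want to complete your approach you would either have to produce an explicit $(a,b,c)$-dependent splitting scheme with all weights computed, or replace it by an argument of the above hull-plus-convexity type.
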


A different generalization of \cite{Pedregal:1998vt} was analysed in \cite{Bandeira:2011tx} - here the rank-one cube is the same, but the barycenter of the measure is varied.


\section{Interaction of frequencies}

In this section we take a closer look at those probability measures $\nu$ that arise from the construction in \eqref{e:nu} with $u$ given by \eqref{e:test2} and $n=m=2$. Note that for $2\times 2$ matrices the determinant is a quadratic function with the identity
\begin{equation}\label{e:det}
\det(X+Y)=\det X+\langle \cof X,Y\rangle +\det Y,
\end{equation}
where $\cof X$ denotes the cofactor matrix and $\langle X,Y\rangle=\sum_{1\leq i,j\leq 2}X_{ij}Y_{ij}$ is the standard Hilbert-Schmidt scalar product on matrices.  

\medskip

First of all, since $u:\T^2\to\R^2$ in \eqref{e:test2} is a periodic Lipschitz function, integration by parts shows that
$$
\int_{\T^2}Du(x)\,dx=0,\,\int_{\T^2}\det Du(x)\,dx=0.
$$
Consequently
\begin{equation}\label{e:pc-constraints}
\sum_{\epsilon\in\{-1,+1\}^N}\nu_\epsilon\,X_{\epsilon}=0,\\ \sum_{\epsilon\in\{-1,+1\}^N}\nu_\epsilon\,\det(X_{\epsilon})=0.
\end{equation}
From these equations we can deduce the following simple lemma:
\begin{lemma}\label{l:N=2}
If $N=2$ and $\n_1,\n_2$ are linearly independent vectors, then the measure $\nu$ satisfies
$$
\nu_{++}=\nu_{+-}=\nu_{-+}=\nu_{--}=1/4.
$$
\end{lemma}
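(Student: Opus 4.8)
The plan is to read the four weights off the two universal identities \eqref{e:pc-constraints}, after first invoking the remark following \eqref{e:nu} that $\nu_\epsilon$ depends only on $\n_1,\n_2,c_1,c_2$ and not on $\a_1,\a_2$. I would therefore take $\a_1=(1,0)$ and $\a_2=(0,1)$, so that $C_1=(1,0)\otimes\n_1$ is supported on the first row and $C_2=(0,1)\otimes\n_2$ on the second row; in particular $C_1$ and $C_2$ are linearly independent in $\R^{2\times2}$, and $C_1+C_2$ is the matrix whose two rows are $\n_1$ and $\n_2$. This normalization is the only place where the hypothesis on $\n_1,\n_2$ is used.

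Next I would compute the determinant at the four vertices. Each $C_i$ has rank one, so $\det C_i=0$, and \eqref{e:det} gives $\det(\pm C_1\pm C_2)=\pm\langle\cof C_1,C_2\rangle$, with sign $+$ at $X_{++}$ and $X_{--}$ and sign $-$ at $X_{+-}$ and $X_{-+}$ (using that $M\mapsto\cof M$ is linear and that $\det(-M)=\det M$ for $2\times2$ matrices). With the above choice $\langle\cof C_1,C_2\rangle=\det(C_1+C_2)=:D$, and $D\neq0$ precisely because $\n_1,\n_2$ are linearly independent. Feeding this into \eqref{e:pc-constraints}, the barycenter identity reads
\begin{equation*}
(\nu_{++}+\nu_{+-}-\nu_{-+}-\nu_{--})\,C_1+(\nu_{++}-\nu_{+-}+\nu_{-+}-\nu_{--})\,C_2=0,
\end{equation*}
and since $C_1,C_2$ are independent both brackets vanish; adding and subtracting them gives $\nu_{++}=\nu_{--}$ and $\nu_{+-}=\nu_{-+}$. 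The determinant identity becomes $(\nu_{++}+\nu_{--}-\nu_{+-}-\nu_{-+})D=0$, hence $\nu_{++}+\nu_{--}=\nu_{+-}+\nu_{-+}$; combined with the two previous equalities this forces all four weights to be equal, and as they sum to $1$ each equals $1/4$.

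The one subtle point --- and the reason the normalization in the first step is needed --- is that the system \eqref{e:pc-constraints} is by itself underdetermined: if one keeps a general choice of $\a_i$ with $\a_1\parallel\a_2$, then every vertex $X_\epsilon$ has rank one, the determinant identity collapses to $0=0$, and the barycenter identity alone only yields $\nu_{++}=\nu_{--}$ and $\nu_{+-}=\nu_{-+}$. Passing to the special choice of $\a_i$, which is harmless since $\nu_\epsilon$ does not depend on them, removes this obstruction. Alternatively one could avoid \eqref{e:pc-constraints} altogether: the affine map $x\mapsto(\n_1\cdot x+c_1,\,\n_2\cdot x+c_2)$ descends to a measure-preserving covering $\T^2\to\T^2$ of degree $|D|$, so that $\nu_\epsilon$ factors as $|\{t\in\T:h'(t+c_1)=\epsilon_1\}|\cdot|\{s\in\T:h'(s+c_2)=\epsilon_2\}|=\tfrac14$.
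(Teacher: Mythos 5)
Your proof is correct and follows essentially the same route as the paper: invoke the independence of $\nu$ from the $\a_i$ to normalize the rank-one directions (the paper takes $\a_i=\n_i$, you take the standard basis vectors, which is an immaterial difference), then solve the linear system coming from the barycenter and determinant constraints \eqref{e:pc-constraints} together with $\sum_\epsilon\nu_\epsilon=1$. Your closing remark about why the normalization is needed (the degenerate case $\a_1\parallel\a_2$ kills the determinant constraint) is a correct and worthwhile observation that the paper leaves implicit.
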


\begin{proof}
Since $\nu$ is independent of $\a_i$, let us set without loss of generality $\a_i=\n_i$, $i=1,2$, and $C_i=\n_i\otimes \n_i$. 
Hence $C_1,C_2$ are linearly independent (in $\R^{2\times 2}$) and, working in coordinates $(x,y)\sim xC_1+yC_2$, 
the first equation in \eqref{e:pc-constraints} leads to 
\begin{eqnarray*}
\nu_{++}+\nu_{+-}-\nu_{-+}-\nu_{--}&=&0,\\
\nu_{++}-\nu_{+-}+\nu_{-+}-\nu_{--}&=&0.
\end{eqnarray*}
Next, a quick calculation based on \eqref{e:det} shows that 
$$
\det(C_1\pm C_2)=\pm\langle \cof C_1,C_2\rangle=\pm(\n_1\cdot \n_2^\perp)^2\neq 0.
$$
Therefore the second equation in \eqref{e:pc-constraints} leads to 
\begin{equation*}
\nu_{++}-\nu_{+-}-\nu_{-+}+\nu_{--}=0.
\end{equation*}
Finally, since $\nu$ is a probability measure, we also have
\begin{equation*}
\nu_{++}+\nu_{+-}+\nu_{-+}+\nu_{--}=1.
\end{equation*}
It is easy to check that the four equations we obtained for $\nu_{\pm\pm}$ has the unique solution
$$
\nu_{++}=\nu_{+-}=\nu_{-+}=\nu_{--}=1/4.
$$
\end{proof}

Next, let us consider the situation where $N=3$ in \eqref{e:test2}. 
Using the periodicity of $f$, we may assume without loss of generality that $c_1=c_2=0$, $c_3=c$. Moreover, we will assume that no two of the vectors $\n_1,\n_2,\n_3$ are collinear. The measure $\nu$ defined in \eqref{e:nu} is now supported on the $8$ vertices of a rank-one cube. Using Lemma \ref{l:N=2} we see that
the sum of the $2$ weights on any edge of the cube is equal to $1/4$, see Figure \ref{f:symmetric}.

\begin{figure}[H]
\begin{center}
\input{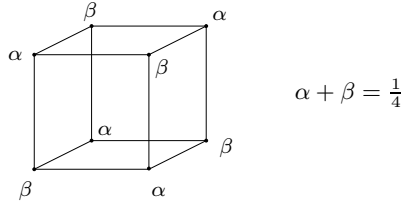}
\end{center}
\caption{Symmetric measures}\label{f:symmetric}
\end{figure}

This motivates the following
\begin{definition}[Symmetric measures]\label{d:symmetric}
A probability measure $\nu$ supported on the vertices of a cube is said to be {\it symmetric} if the weights satisfy 
\begin{eqnarray*}
\nu_{+++}=\nu_{+--}=\nu_{-+-}=\nu_{--+}&=&\alpha,\\
\nu_{---}=\nu_{++-}=\nu_{+-+}=\nu_{-++}&=&\beta,
\end{eqnarray*}
with $\alpha+\beta=1/4$.
\end{definition}

Returning to the formula \eqref{e:nu} we see that the weights $\alpha,\beta$ can be obtained from calculating
$$
\int_{\T^2}\chi(x)\,dx=4(\alpha-\beta),
$$
where $\chi(x)=\prod_{i=1}^3h'(\n_i\cdot x+c_i)$. Using the periodicity and an affine change of variables, we may then assume
that $\n_1=(1,0)$, $\n_2=(0,1)$ and $\n_3\in\Z^2$. As a consequence of Lemma \eqref{l:N=2}, it then suffices to calculate
$$ 
\int_{Q}h'(\n_3\cdot x+c)\,dx=\alpha-\beta,
$$
where $Q=(0,1/2)\times (0,1/2)$. In the following lemma we perform this calculation, however for simplicity we rescale $Q$ to be the unit square.

\begin{lemma}
Let $\n=(k,l)$ with $k,l\in\N$, $Q=(0,1)^2$, and let $f$ be the 2-periodic function
$$
f(t)=\begin{cases} +1&\text{ if }t\in [0,1)\\
                   -1&\text{ if }t\in [1,2)
     \end{cases}.
$$
If either $k$ or $l$ is even, then $\int_Qf(x\cdot \n+c)\d x=0$. Otherwise
$$
\max_c\int_Q f(x\cdot \n+c)\d x=\frac{1}{2kl}.
$$
\end{lemma}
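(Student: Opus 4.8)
The plan is to reduce the double integral to iterated one‑dimensional integrals and to exploit that $f$ has period $2$ and zero mean over a period. Its primitive $F(t):=\int_0^t f(s)\d s$ is then also $2$‑periodic, and explicitly $F(t)=t$ on $[0,1]$, $F(t)=2-t$ on $[1,2]$; that is, $F$ is the triangular wave with values in $[0,1]$. Two elementary facts about $F$ are used throughout: the half‑period antisymmetry $F(t+1)=1-F(t)$ (encoding the sign flip of $f$ after half a period), and $\int_0^2 F(t)\d t=1$.

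First I would compute the inner integral in $x_1$. With $\n=(k,l)$ the substitution $u=kx_1$ gives
\[
\int_0^1 f\bigl(kx_1+a\bigr)\d x_1=\frac1k\bigl(F(a+k)-F(a)\bigr).
\]
If $k$ is even then $F(a+k)=F(a)$ by periodicity, so this vanishes for every $a$, hence $\int_Q f(x\cdot\n+c)\d x=0$; the case $l$ even is identical after integrating in $x_2$ first. So assume $k$ and $l$ are both odd. Then $F(a+k)=F(a+1)=1-F(a)$, so the inner integral equals $\tfrac1k\bigl(1-2F(lx_2+c)\bigr)$. Integrating this in $x_2$, again peeling off the $\tfrac{l-1}{2}$ full periods and using $\int_0^2 F=1$, one arrives at
\[
\int_Q f(x\cdot\n+c)\d x=\frac{1}{kl}\bigl(1-2G(c)\bigr),\qquad G(c):=\int_c^{c+1}F(t)\d t.
\]

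It remains to maximize the right‑hand side, i.e.\ to minimize $G$. Differentiating, $G'(c)=F(c+1)-F(c)=1-2F(c)$, so the critical points are exactly the half‑integers $c\equiv\tfrac12,\tfrac32\pmod 2$; since $G''=-2F'$ and $F'=-1$ on $(1,2)$, the point $c\equiv\tfrac32$ is the minimum (global, by periodicity), and a direct evaluation gives $G(3/2)=\int_{3/2}^{5/2}F=\tfrac14$. Hence $\max_c\int_Q f(x\cdot\n+c)\d x=\tfrac{1}{kl}\bigl(1-\tfrac12\bigr)=\tfrac{1}{2kl}$. The computation is entirely elementary; the only point requiring care is bookkeeping the phase $a=lx_2+c$ through the two substitutions and invoking the parity hypotheses at the right moment — each odd index contributes one factor $1/k$, respectively $1/l$, together with one sign flip $F\mapsto 1-F$, and it is the composition of these two flips that leaves the surviving term $1-2G(c)$. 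I do not anticipate a genuine obstacle beyond this bookkeeping.
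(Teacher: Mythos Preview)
Your proof is correct and follows essentially the same route as the paper: reduce by Fubini and substitution, use that integrals over full periods vanish so only the parity of $k,l$ matters, and then optimize the one remaining $2$-periodic function of $c$. The only cosmetic difference is that you organize the bookkeeping through the primitive $F$ and its half-period identity $F(t+1)=1-F(t)$, whereas the paper introduces the intermediate function $g(c)=\tfrac1k\int_c^{c+1}f(t)\d t$ (which is exactly your $\tfrac1k(1-2F(c))$) and then integrates $g$ once more to get an explicit quadratic $I(c)=\tfrac{2c(c-1)}{kl}$ on $(0,1)$; both lead to the same extremum at $c\equiv 3/2$.
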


\begin{figure}[H]
\begin{center}
\input{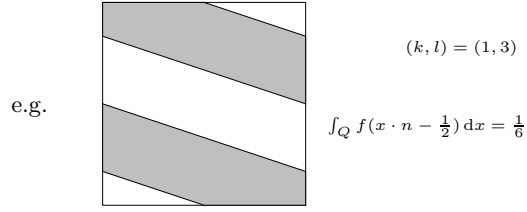}
\end{center}
\caption{Calculating volume fractions}\label{f:volume}
\end{figure}

\begin{proof}
Note that $f(t)=f(t+2)$ for all $t$, and $\int_If(t)\d t=0$ for any interval $I$ of length 2. For any $c$ we have
$$
\int_0^1f(kx_1+c)\d x_1=\frac{1}{k}\int_{c}^{k+c}f(t)\d t,
$$
and so if $k$ is even, the above integral is zero. Then Fubini gives
$\int_Qf(x\cdot \n)\d x=0$
whenever $k$ or $l$ is even. Moreover, if $k$ is odd, then
$$
\frac{1}{k}\int_{c}^{k+c}f(t)\d t=
\frac{1}{k}\bigl(\int_{c}^{c+1}+\int_{c+1}^{c+k}\bigr)f(t)\d t=
\frac{1}{k}\int_{c}^{c+1}f(t)\d t=:g(c),
$$
where $g(c)=g(c+2)$ and $g(c)+g(c+1)=0$. Furthermore, it is easy to verify that 
$$
g(c)=\begin{cases}\frac{1}{k}(1-2c)\text{ if }c\in[0,1)\\
                  \frac{1}{k}(2c-3)\text{ if }c\in[1,2)
     \end{cases}.
$$
\begin{figure}[H]
\begin{center}
\input{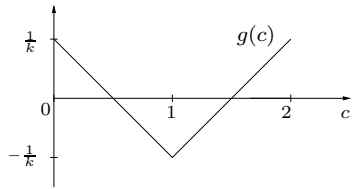}
\caption{$\int_0^{1}f(kx_1+c)\d x_1$ is a 2-periodic function of $c$.}
\end{center}
\end{figure}
\noindent
Note that $\int_Ig(s)\d s=0$ for any interval of length 2. Now using Fubini 
$$
I(c)=\int_Qf(x\cdot \n+c)\d x=\int_0^1g(lx_2+c)\d x_2=
\frac{1}{l}\int_c^{c+1}g(s)\d s
$$
by the same argument as before. So $I$ is 2-periodic and $I(c)+I(c+1)=0$.
Now let $c\in (0,1)$. Then
$$
I(c)=\frac{1}{kl}\biggl(\int_c^1(1-2s)\d s+
\int_1^{1+c}(2s-3)\d s\biggr)=\frac{2c(c-1)}{kl}.
$$
\begin{figure}[H]
\begin{center}
\input{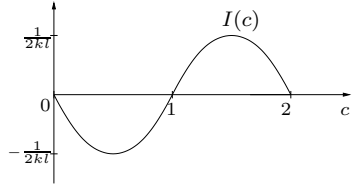}
\caption{$I(c)=\int_Qf(x\cdot \n+c)\d x$ is a 2-periodic function of $c$.}
\end{center}
\end{figure}

\end{proof}

Summarizing the results of this section we then obtain:
\begin{corollary}
If $u:\T^2\to\R^2$ is given by
$$
u(x)=\sum_{i=1}^3h'(\n_i\cdot x+c_i)
$$
for some $\n_i,c_i$ and $\nu$ is the probability measure obtained from the formula \eqref{e:nu}, then
$\nu$ is a symmetric probability measure supported on 
$\{\sum_{i=1}^3\pm \a_i\otimes \n_i\}$ with
\begin{eqnarray*}
\nu_{+++}=\nu_{+--}=\nu_{-+-}=\nu_{--+}&=&\alpha,\\
\nu_{---}=\nu_{++-}=\nu_{+-+}=\nu_{-++}&=&\beta,
\end{eqnarray*}
such that $\alpha+\beta=1/4$ and $-1/8\leq \alpha-\beta\leq 1/8$.

where $\gamma\in [-\frac{1}{16kl},\frac{1}{16kl}]$ if $k$ and $l$ are both odd, or $\gamma=0$ if $k$ or $l$ is even.
\end{corollary}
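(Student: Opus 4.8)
The plan is to derive the Corollary by combining the two preceding results: Lemma \ref{l:N=2} produces the symmetric structure (and the identity $\alpha+\beta=1/4$), while the frequency-interaction Lemma above controls $\alpha-\beta$. For $u$ of the form \eqref{e:test2} with $N=3$, write $\epsilon_i(x):=h'(\n_i\cdot x+c_i)\in\{-1,+1\}$, so that $Du(x)=\sum_{i=1}^3\epsilon_i(x)\,\a_i\otimes\n_i$ and $\nu$ is the joint law of $(\epsilon_1,\epsilon_2,\epsilon_3)$; in particular $\nu$ is supported on $\{\sum_{i=1}^3\pm\a_i\otimes\n_i\}$. For any pair $i\neq j$, summing $\nu_\epsilon$ over the remaining index yields exactly the weight at $(\epsilon_i,\epsilon_j)$ of the two-frequency measure attached by \eqref{e:nu} to $\a_ih(\n_i\cdot x+c_i)+\a_jh(\n_j\cdot x+c_j)$; since no two of the $\n_i$ are collinear, Lemma \ref{l:N=2} applies and this weight equals $1/4$. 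Thus the sum of the two weights sitting on each of the twelve edges of the cube is $1/4$, and one checks that these edge relations alone already force the structure $\nu_{+++}=\nu_{+--}=\nu_{-+-}=\nu_{--+}=:\alpha$, $\nu_{---}=\nu_{++-}=\nu_{+-+}=\nu_{-++}=:\beta$ with $\alpha+\beta=1/4$ (the total mass is then automatically $1$, and the determinant constraint in \eqref{e:pc-constraints} is not even needed). This is exactly Definition \ref{d:symmetric}, and gives the first part of the Corollary.

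Next I would pin down $\alpha-\beta$. The product $\chi(x):=\prod_{i=1}^3h'(\n_i\cdot x+c_i)=\epsilon_1(x)\epsilon_2(x)\epsilon_3(x)$ equals $+1$ on precisely the four vertices carrying weight $\alpha$ and $-1$ on the four carrying weight $\beta$, hence $\int_{\T^2}\chi\,dx=\sum_{\epsilon\in\{-1,+1\}^3}(\epsilon_1\epsilon_2\epsilon_3)\,\nu_\epsilon=4(\alpha-\beta)$. Normalising $c_1=c_2=0$, $c_3=c$ and performing the affine change of variables described before the Lemma, after which $\n_1=(1,0)$, $\n_2=(0,1)$ and $\n_3=(k,l)\in\Z^2$ (with $k,l\in\N$, since replacing $(\n_3,c)$ by $(-\n_3,-c)$ merely interchanges $\alpha$ and $\beta$), I would split $\T^2$ into the four half-open sub-squares of side $\tfrac12$. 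On each of them $h'(x_1)h'(x_2)$ is a constant sign, and translating every sub-square back to $Q:=(0,\tfrac12)^2$ while using $h'(t+1)=h'(t)$ and $h'(t+\tfrac12)=-h'(t)$ shows that the four contributions to $\int_{\T^2}\chi$ cancel unless $k$ and $l$ are both odd, in which case they add up to $4\int_Q h'(\n_3\cdot x+c)\,dx$. Therefore $\alpha-\beta=0$ if $k$ or $l$ is even, and $\alpha-\beta=\int_Q h'(\n_3\cdot x+c)\,dx$ if both are odd.

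Finally I would invoke the frequency-interaction Lemma. Rescaling $Q$ to the unit square via $x\mapsto x/2$ and observing that $h'(s)=f(2s)$ for a.e.\ $s$, where $f$ is the $2$-periodic square wave of that Lemma, one gets $\int_Q h'(\n_3\cdot x+c)\,dx=\tfrac14\int_{(0,1)^2}f(\n_3\cdot y+2c)\,dy$. For $k,l$ both odd the Lemma gives $\max_c\int_{(0,1)^2}f(\n_3\cdot y+2c)\,dy=\tfrac{1}{2kl}$, and the function $c\mapsto\int_{(0,1)^2}f(\n_3\cdot y+c)\,dy$ computed in its proof is odd under $c\mapsto c+1$, so its range is $[-\tfrac{1}{2kl},\tfrac{1}{2kl}]$. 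Hence $\alpha-\beta\in[-\tfrac{1}{8kl},\tfrac{1}{8kl}]$ when $k,l$ are both odd and $\alpha-\beta=0$ otherwise; since $kl\geq1$ this yields $-1/8\leq\alpha-\beta\leq1/8$ in all cases, and putting $\gamma:=\tfrac12(\alpha-\beta)$ (so that $\alpha=\tfrac18+\gamma$, $\beta=\tfrac18-\gamma$) records the sharper bound $\gamma\in[-\tfrac{1}{16kl},\tfrac{1}{16kl}]$, with $\gamma=0$ whenever $k$ or $l$ is even.

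I expect no deep obstacle here, since both ingredients are already established and the argument is essentially assembly. The one point that demands care is the reduction in the second paragraph: arranging the affine change of variables so that the three frequencies become integer vectors with $\n_1,\n_2$ axis-parallel, and then correctly tracking the signs $h'(t+\tfrac12)=-h'(t)$ across the four sub-squares through the parity case split according to whether $k$ or $l$ is even or both are odd. That bookkeeping — together with keeping the various factors of $\tfrac14$ straight in the rescaling — is where an error would most easily slip in.
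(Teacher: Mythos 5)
Your argument is correct and is essentially the paper's own route: the corollary is presented there as a summary of the section, and you assemble precisely its ingredients --- Lemma \ref{l:N=2} applied to the three pairwise marginals (whose edge-sum relations do force the symmetric structure with $\alpha+\beta=1/4$, no further use of the determinant constraint needed), the identity $\int_{\T^2}\chi\,dx=4(\alpha-\beta)$ reduced by your sub-square parity computation to $\int_Q h'(\n_3\cdot x+c)\,dx$, and the frequency lemma after the rescaling $h'(s)=f(2s)$, yielding $\alpha-\beta=0$ in the even case and $|\alpha-\beta|\le\frac{1}{8kl}\le\frac18$ when $k,l$ are both odd (your reading $\gamma=\tfrac12(\alpha-\beta)$ of the dangling clause is the intended one). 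The only bookkeeping point worth adding is that your normalization $k,l\in\N$ via $(\n_3,c)\mapsto(-\n_3,-c)$ covers only the case of two negative entries; when $\n_3$ has mixed signs, compose with a reflection such as $x_2\mapsto-x_2$ (which flips $\epsilon_2$ and merely exchanges $\alpha$ and $\beta$), or simply note that the resulting bound depends only on $|kl|$.
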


In particular if $\n_1=(1,0)$, $\n_2=(0,1)$ and $\n_3=(k,l)$ ($k,l\in\Z$), then 
$\alpha-\beta\in [-\frac{1}{8kl},\frac{1}{8kl}]$ if $k$ and $l$ are both odd, and $\alpha=\beta$ if $k$ or $l$ is even.
The extremal cases where $|\alpha-\beta|=1/8$ arise from $\n_1=(1,0)$, $\n_2=(0,1)$ and $\n_3=(1,1)$.

There is one parameter 
giving symmetric measures, either $\alpha-\beta$ as above, or, equivalently, 
$$
\frac{\nu(X_0)}{\nu(X_1)}=\frac{\alpha}{\beta}\in [1/3,3].
$$
In the rest of this paper we show that for any ratio $\alpha/\beta\in [1/3,3]$ the corresponding symmetric measures laminates. As a consequence 
no counterexample to the equivalence of rank-one convexity and quasiconvexity can arise from such configurations.


\section{Laminates and semiconvex hulls}

Let $\P(\R^{2\times 2})$ denote the set of all compactly supported probability measures on $\R^{2 \times 2}$.
For $\nu \in \P$ we denote by $\overline{\nu} = \int_{\R^{2\times 2}}{X d\nu(X)}$ the center of mass or \textit{barycenter} of $\nu.$

\begin{definition}\label{d:laminate}
A measure $\nu \in \P$ is called a \textit{laminate}, denoted $\nu\in\mathcal{L}$, if 
\begin{equation}\label{e:deflam}
f(\overline{\nu}) \leq \int_{\R^{2\times 2}}{f d\nu}
\end{equation} 
for all rank-one convex functions $f$. The set of laminates with barycenter $0$ and supported in a compact set $K\subset\R^{2\times 2}$ is denoted by $\mathcal{L}_0(K)$. 
\end{definition}
With this definition, the question of whether an inequality of the type \eqref{e:qc-test} holds for all rank-one convex functions amounts to the question of whether $\nu$ is a laminate. We note in passing that $\mathcal{L}_0(K)$ is a convex set.

\begin{definition}\label{d:prelam}
We call $\mathcal{PL}(\R^{2\times 2})$ the set of \emph{prelaminates}. This is the smallest
class of probability measures on $\R^{2\times 2}$ which
\begin{itemize}
\item contains all measures of the form $\lambda \delta_A+(1-\lambda)\delta_B$ with $\lambda\in [0,1]$ and $\textrm{rank}(A-B)=1$;
\item is closed under splitting in the following sense: if $\lambda\delta_A+(1-\lambda)\tilde\nu$ belongs to $\mathcal{PL}(\R^{2\times 2})$ for some $\tilde\nu\in\P(\R^{2\times 2})$ and $\mu$ also belongs to $\mathcal{PL}(\R^{2\times 2})$ with $\overline{\mu}=A$, then also $\lambda\mu+(1-\lambda)\tilde\nu$ belongs to $\mathcal{PL}(\R^{2\times 2})$.
\end{itemize}
\end{definition} 
The order of a prelaminate denotes the number of splittings required to obtain the measure from a Dirac measure. It is clear from the definition that $\mathcal{PL}(\R^{2\times 2})$ consists of atomic measures. Also, from a repeated application of Jensen's inequality it follows that $\mathcal{PL}\subset\mathcal{L}$. Furthermore, $\nu\in\mathcal{L}$ if and only if there exists a sequence $\nu_k\in \mathcal{PL}$ with uniformly bounded support, such that $\nu_k\overset{*}{\rightharpoonup}\nu$ (see \cite{Pedregal:1993wu}).

We remind the reader that prelaminates, as defined above, are precisely those probability measures $\nu=\sum_{i=1}^N\lambda_i\delta_{X_i}$ for which the sequence $\{(\lambda_i,X_i)\}_{1\leq i\leq N}$ satisfies the 
$(H_N)$-condition (see \cite{Pedregal:1993wu}). It is also worth noting that in \cite{Pedregal:1996di,Pedregal:1998vt}
the inequality \eqref{e:main} is verified for rank-one convex functions in the case
$$
\n_1=\a_1=(1,0),\,\n_2=\a_2=(0,1),\,\n_3=\a_3=(1,1)
$$
by showing that the associated measure $\nu$ is a convex combination of 3 prelaminates of order 6. 

\bigskip

Next, we introduce the various semiconvex hulls. Let $K\subset\R^{2\times 2}$ be a compact set. The lamination-convex hull is defined as follows. We first set $K^{lc,0}:=K$, and for any $i\geq 0$
$$
K^{lc,i+1}:=\left\{\lambda X+(1-\lambda)Y:\,X,Y\in K^{lc,i},\,\rank(X-Y)=1,\,\lambda\in [0,1]\right\}.
$$
Then, the lamination-convex hull is defined as $K^{lc}=\bigcup_{i\geq 0}K^{lc,i}$. The rank-one convex hull is defined in terms of separation with rank-one convex functions:
$$
K^{rc}:=\left\{X:\,f(X)\leq \sup_{K}f\quad\textrm{ for all rank-one convex $f$}\right\}.
$$
It is not difficult to see (e.g. \cite{Muller:1996wb,Matousek:1998wh,Kirchheim:2003wp}) that $K^{rc}\supset K^{lc}$, but equality does not necessarily hold. Moreover, another characterization of $K^{rc}$ follows from duality:
$$
K^{rc}=\left\{\bar{\nu}:\,\nu\textrm{ is a laminate with }\supp\nu\subset K\right\}.
$$
Finally, the polyconvex hull of $K$ is defined as
$$
K^{pc}=\left\{\bar{\nu}:\,\nu\in\P(\R^{2\times 2})\textrm{ with }\supp\nu\subset K\textrm{ and }\int\det(X)d\nu(X)=\det(\bar{\nu})\right\}
$$
Since the functions $X\mapsto \pm\det X$ are rank-one convex, we see that $K^{rc}\subset K^{pc}$.

\medskip

For calculating lamination hulls, the following will be useful:
\begin{lemma}\label{L:quadraticsurface}
Suppose $K=\{X_1,X_2,X_3,X_4\}\subset\R^{2\times 2}$ is a {\it rank-one square}, in other words suppose
$$
\det(X_1-X_2)=0,\,\det(X_2-X_3)=0,\\
\det(X_3-X_4)=0,\,\det(X_4-X_1)=0.
$$
Then $K^{lc}=K^{pc}$. For the hull there are three cases depending on the 
determinant of the `diagonals':
\begin{enumerate}
\item If $\det(X_1-X_3)=0$ and $\det(X_2-X_4)\neq 0$, then $X_1$, $X_2$, $X_3$ and $X_1$, $X_3$, $X_4$ both lie on a rank-one plane, and
$$
K^{lc}=\{X_1,X_2,X_3\}^{co}\cup\{X_1,X_3,X_4\}^{co}.
$$  
Furthermore if in addition $\det(X_2-X_4)=0$, then $K^{lc}=K^{co}$.
\item If $\det(X_1-X_3)$ and $\det(X_2-X_4)$ have the same sign, then 
$$
K^{rc}=K^{lc,1}=[X_1,X_2]\cup[X_2,X_3]\cup[X_3,X_4]\cup[X_4,X_1].
$$
\item If $\det(X_1-X_3)$ and $\det(X_2-X_4)$ have opposite sign, then
$$
K^{pc}=K^{lc,2},
$$
and there exists a continuous increasing function 
$f:[0,1]\to[0,1]$ with $f(0)=0$ and $f(1)=1$ such that 
for any $t\in[0,1]$ we have
$$
\det\left((tX_1+(1-t)X_2)-(sX_4+(1-s)X_3)\right)=0,
$$
where $s=f(t)$.
\end{enumerate}
\end{lemma}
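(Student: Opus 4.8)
I would analyse the rank-one square $K=\{X_1,X_2,X_3,X_4\}$ by passing to convenient coordinates. Because consecutive differences $X_{i+1}-X_i$ are rank one, and because the determinant is the quadratic form from \eqref{e:det}, the four points trace out a ``closed rank-one quadrilateral''. The key object is the function $Q(X):=\det(X-X_1)$ (or, symmetrically, one can centre at the barycentre); restricted to any rank-one segment $[A,B]$ with $\det(A-B)=0$ the map $t\mapsto Q(tA+(1-t)B)$ is \emph{affine}, by \eqref{e:det}. This is the structural fact that makes $K^{lc}=K^{pc}$ plausible: the polyconvex hull is cut out by the single constraint $\int\det\,d\nu=\det\bar\nu$, and on the rank-one square this constraint is controlled by how $Q$ behaves along the two families of rank-one segments joining opposite sides.

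Concretely, I would first record the elementary lemma that if $\det(A-B)=0$ then $s\mapsto\det\big((1-s)A+sB - C\big)$ is affine in $s$ for every fixed $C$; iterating, $\det$ is separately affine in the two ``bilinear'' coordinates one gets by writing a general point of the lamination hull as $tX_1+(1-t)X_2$ laminated against $sX_4+(1-s)X_3$ (these are rank-one segments by hypothesis). Then I would compute the sign pattern of $\det$ of the diagonals: set $p=\det(X_1-X_3)$, $q=\det(X_2-X_4)$. A short computation using \eqref{e:det} and the four edge relations expresses, for $P=tX_1+(1-t)X_2$ on the ``left'' edge and $R=sX_3+(1-s)X_4$ on the ``right'' edge, the quantity $\det(P-R)$ as a function $\Phi(t,s)$ which is affine in $t$ and affine in $s$ separately, with $\Phi(0,0),\Phi(1,1),\Phi(1,0),\Phi(0,1)$ equal (up to signs/labels) to $0$, $0$, and the diagonal determinants. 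In Case~1 ($p=0$, i.e. $X_1,X_2,X_3$ on a rank-one plane), $\Phi$ degenerates so that the hull splits into two triangles, each of which is an ordinary rank-one (hence convex, by the affine-determinant remark) plane; the sub-case $q=0$ collapses everything to $K^{co}$. In Cases~2 and~3 one uses the sign of $pq$: if $pq>0$ the bilinear form $\Phi$ has no zero in the open unit square (the affine-in-each-variable function with equal-sign corner values off the diagonal cannot vanish there after accounting for the two zero corners), so no point strictly between the two edges can be reached by a single rank-one connection, forcing $K^{lc,1}=K^{lc}$ to be just the boundary quadrilateral; one then checks $K^{rc}$ equals this too, e.g. by exhibiting separating rank-one (polyconvex) functions, namely $\pm\det(\cdot-X_i)$ type functions, which already appear in the definition of $K^{pc}$.

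Case~3 ($pq<0$) is where the real work is, and I expect it to be the main obstacle. Here $\Phi(t,s)=0$ defines, by the implicit function theorem applied to the affine-in-$s$ equation $\Phi(t,s)=0$ (whose $s$-coefficient is nonvanishing on $[0,1]$ precisely because $p$ and $q$ have opposite signs, so $\Phi(t,0)$ and $\Phi(t,1)$ have opposite signs for each $t$), a unique $s=f(t)\in[0,1]$; solving the linear equation gives an explicit rational expression for $f$, and one reads off $f(0)=0$, $f(1)=1$, continuity, and monotonicity (monotonicity because $f$ is a Möbius-type function of $t$ with no pole in $[0,1]$, again by the sign condition). For each $t$ the segment $[P(t),R(f(t))]$ is rank-one by construction, so laminating along it is legal; taking $P(t)$ itself as a rank-one combination of $X_1,X_2$ and $R(f(t))$ as one of $X_3,X_4$ shows every point of the region swept out lies in $K^{lc,2}$. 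For the reverse inclusion $K^{lc,2}\subset K^{pc}$ one uses that prelaminates preserve the polyconvexity constraint (Jensen for $\pm\det$), so $K^{lc}\subset K^{pc}$ always; the matching upper bound $K^{pc}\subset K^{lc,2}$ follows because any $Y\in K^{pc}$ has $\bar\nu=Y$ for some $\nu$ supported on the four points with $\int\det\,d\nu=\det Y$, and the four-point, zero-degree-of-freedom-after-constraints structure forces $\nu$ to be expressible through the family $\{[P(t),R(f(t))]\}$. I would double-check the orientation/sign bookkeeping on the edges carefully, since a single mislabelled sign flips Case~2 and Case~3, and I would verify the boundary behaviour of $f$ directly from its explicit formula rather than by a limiting argument.
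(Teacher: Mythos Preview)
Your approach is essentially the same as the paper's, but organized around the bi-affine function $\Phi(t,s)=\det\bigl(P(t)-R(s)\bigr)$ rather than barycentric coordinates. The paper instead writes a generic point as $P=\sum_i\lambda_iX_i$ and computes, using \eqref{e:det} and the four edge relations, that $P\in K^{pc}$ if and only if
\[
\lambda_1\lambda_3\,d_{13}+\lambda_2\lambda_4\,d_{24}=0,\qquad d_{ij}:=\det(X_i-X_j).
\]
These are equivalent: substituting $\lambda_1=\mu t$, $\lambda_2=\mu(1-t)$, $\lambda_3=(1-\mu)(1-s)$, $\lambda_4=(1-\mu)s$ turns the barycentric constraint into $\mu(1-\mu)\,\Phi(t,s)=0$, and your explicit $f(t)$ coincides with the paper's $f(t)=td_{13}/\bigl(td_{13}-(1-t)d_{24}\bigr)$.

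Where your write-up is thinner than the paper is precisely the inclusion $K^{pc}\subset K^{lc,2}$. The paper gets this in one stroke: given $P\in K^{pc}$ with weights $\lambda_i$, it sets $P_1=\tfrac{\lambda_1}{\lambda_1+\lambda_2}X_1+\tfrac{\lambda_2}{\lambda_1+\lambda_2}X_2$ and $P_2=\tfrac{\lambda_3}{\lambda_3+\lambda_4}X_3+\tfrac{\lambda_4}{\lambda_3+\lambda_4}X_4$, so that $P\in[P_1,P_2]$ automatically, and then verifies $\det(P_1-P_2)=0$ by a direct expansion that uses the constraint above. Your ``zero-degree-of-freedom-after-constraints'' remark is the right intuition but is not a proof; you should make the link between the polyconvex constraint on $\nu$ and the equation $\Phi(t,s)=0$ explicit, exactly as in the substitution above. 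Similarly, in Case~2 your $\Phi$-argument only rules out rank-one segments between \emph{opposite} edges; it says nothing about segments joining adjacent edges (or about $K^{lc,i}$ for $i\ge 2$). The paper bypasses this by reading off from the barycentric constraint that $\lambda_1\lambda_3=\lambda_2\lambda_4=0$ when $d_{13}d_{24}>0$, which pins $K^{pc}$ to the four edges and hence forces $K^{lc}=K^{rc}=K^{pc}=K^{lc,1}$ in one line. Adding that single computation would close both gaps in your outline.
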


\begin{remark}
In fact, it is not difficult to check that if $X_i\in\R^{2\times 2}$, $i=1\dots 4$, are not coplanar, then there 
exists $R\in \textrm{span}\{X_1,\dots,X_4\}$ in the affine span and $\alpha\in\R$ such that for all $i$
\begin{equation}\label{e:smallsystem}
\det(X_i-R) = \alpha.
\end{equation}
To see this, subtract equation $i$ from equation $1$ in \eqref{e:smallsystem} and use \eqref{e:det} to obtain
\begin{equation*}
\langle \cof X_i-\cof X_1, R\rangle = \det X_i-\det X_1,\quad i=2,3,4.
\end{equation*}
Since we assumed that the matrices $X_1,\dots,X_4$ are not coplanar, the above linear system 
uniquely determines $R\in \textrm{span}\{X_1,\dots,X_4\}$. The scalar $\alpha$ is then obtained by back-substitution. 

Applying this observation to Case 3.~in the Lemma above shows that $X_1,\dots,X_4$ in this case lie on a one-sheeted hyperboloid (a doubly ruled surface) given by the equation $\det(X-R)=\alpha$.
\end{remark}

\begin{proof}[Proof of Lemma \ref{L:quadraticsurface}]
Let $P=\sum_{i=1}^4\lambda_iX_i$, where $\sum_{i=1}^4\lambda_i=1$.
By direct calculation 
\begin{equation*}
\begin{split}
\det P&=\sum_{i=1}^4\lambda_i\det X_i-\frac{1}{2}\sum_{i,j=1}^4
\lambda_i\lambda_j\det(X_i-X_j)\\
&=\sum_i\lambda_i\det X_i-\frac{1}{2}
(\lambda_1\lambda_3 d_{13}+\lambda_2\lambda_4d_{24}),
\end{split}
\end{equation*}
where $d_{ij}=\det(X_i-X_j)$.
In particular $P\in K^{pc}$ if and only if 
\begin{equation}\label{E:pc}
\lambda_1\lambda_3d_{13}+\lambda_2\lambda_4d_{24}=0.
\end{equation}
So if 
$d_{13}$ and $d_{24}$ have the same sign, then 
$$
K^{pc}=K^{lc,1}=[X_1,X_2]\cup[X_2,X_3]\cup[X_3,X_4]\cup[X_4,X_1].
$$
Suppose that $d_{13}>0$ and $d_{24}<0$, and let $P\in K^{pc}$. 
Consider the points 
$$
P_1=\frac{\lambda_1}{\lambda_1+\lambda_2}X_1+
\frac{\lambda_2}{\lambda_1+\lambda_2}X_2,\,
P_2=\frac{\lambda_3}{\lambda_3+\lambda_4}X_3+
\frac{\lambda_4}{\lambda_3+\lambda_4}X_4.
$$
Clearly $P\in[P_1,P_2]$. Furthermore
\begin{equation*}
\begin{split}
\det&(P_1-P_2)=\det(P_1)+\det(P_2)-\langle\cof P_1,P_2\rangle\\
=&\frac{\lambda_1}{\lambda_1+\lambda_2}\det X_1+
\frac{\lambda_2}{\lambda_1+\lambda_2}\det X_2+
\frac{\lambda_3}{\lambda_3+\lambda_4}\det X_3+
\frac{\lambda_4}{\lambda_3+\lambda_4}\det X_4-\\
&-\frac{1}{(\lambda_1+\lambda_2)(\lambda_3+\lambda_4)}
\biggl(\lambda_1\lambda_3(\det X_1+\det X_3-\det(X_1-X_3))+\\
&\quad\lambda_2\lambda_4(\det X_2+\det X_4-\det(X_2-X_4))+\\
&\quad\lambda_1\lambda_4(\det X_1+\det X_4)+
\lambda_2\lambda_3(\det X_2+\det X_3)\biggr)\\
=&\frac{\lambda_1}{\lambda_1+\lambda_2}\det X_1+
\frac{\lambda_2}{\lambda_1+\lambda_2}\det X_2+
\frac{\lambda_3}{\lambda_3+\lambda_4}\det X_3+
\frac{\lambda_4}{\lambda_3+\lambda_4}\det X_4-\\
&-\frac{1}{(\lambda_1+\lambda_2)(\lambda_3+\lambda_4)}
\biggl((\lambda_1+\lambda_2)(\lambda_3\det X_3+\lambda_4\det X_4)+\\
&\quad(\lambda_3+\lambda_4)(\lambda_1\det X_1+\lambda_2\det X_2)
\biggr)\\
=&0
\end{split}
\end{equation*}
using (\ref{E:pc}). Thus if $P\in K^{pc}$, then $P$ lies on the rank-one segment $[P_1,P_2]$, hence in $K^{lc,2}$. For completeness we find now $f$. Let $t\in (0,1)$ and $s=f(t)$. By above we have
$$
t=\frac{\lambda_1}{\lambda_1+\lambda_2}\,\text{ and }
s=\frac{\lambda_4}{\lambda_3+\lambda_4}.
$$
Thus $\lambda_2=\frac{1-t}{t}\lambda_1$ and 
$\lambda_3=\frac{1-s}{s}\lambda_4$. Substituting this into (\ref{E:pc}) gives
$$
(1-s)td_{13}+(1-t)sd_{24}=0
$$
which after rearranging gives
$$
f(t)=\frac{td_{13}}{td_{13}-(1-t)d_{24}}.
$$

\smallskip

If $\det(X_1-X_3)=0$, then we have the following degenerate cases. Either $K$ lies in a plane, which then is a rank-one plane (hence $K^{pc}=K^{lc,1}=K^c$),
or we have $\det(X_2-X_4)\neq 0$ and then condition (\ref{E:pc}) reduces to
$\lambda_2\lambda_4=0$, so that $K^{pc}=K^{lc,1}$ consists of the convex hulls of the two triangles $X_1,X_2,X_3$ and $X_1,X_3,X_4$.
\end{proof}


\section{Symmetric laminates on cubes}

In this section we prove our main theorem, Theorem \ref{t:main}. 
Let us recall the setting.
Let $C_1,C_2,C_3\in \R^{2\times 2}$ be rank-one matrices, and for $\epsilon\in\{-1,+1\}^3$ let 
$$
X_{\epsilon}=\sum_{i=1}^3\epsilon_iC_i
$$ 
and $K=\{X_{\epsilon}:\,\epsilon\in\{-1,+1\}^3\}$, see Figure \ref{f:rank1cube}.

\begin{figure}[H]
\begin{center}
\input{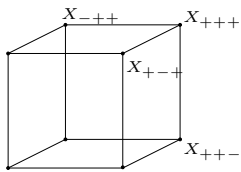}
\end{center}
\caption{The rank-one cube.}
\label{f:rank1cube}
\end{figure}

From now on we work in the coordinates given by $\{C_1,C_2,C_3\}$, i.e. $(x,y,z)$ corresponds to $xC_1+yC_2+zC_3$. The determinant in these coordinates is
$$
\det(x,y,z)=axy+bxz+cyz,
$$
being a quadratic form vanishing on the axes, and we have 
\begin{equation*}
\begin{split}
\det(1,1,1)&=a+b+c\\
\det(-1,1,1)&=-a-b+c\\
\det(1,-1,1)&=-a+b-c\\
\det(1,1,-1)&=a-b-c
\end{split}
\end{equation*}

\begin{lemma}
If $abc=0$, then any symmetric measure  in the sense of Definition \ref{d:symmetric} is a laminate.
\end{lemma}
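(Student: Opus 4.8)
The plan is to use the hypothesis $abc=0$ to locate a rank-one plane among the coordinate planes, split $\nu$ into two halves each supported on (a translate of) that plane, and conclude by two applications of Jensen's inequality.

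The first step is a normalisation. I would observe that the class of symmetric measures is invariant under permutations of $C_1,C_2,C_3$: such a permutation permutes the eight vertices $X_\epsilon$ but leaves the product $\epsilon_1\epsilon_2\epsilon_3$ unchanged, hence preserves the partition of the vertices into the four carrying weight $\alpha$ and the four carrying weight $\beta$. Since permuting the coordinates $(x,y,z)$ permutes the coefficients $a,b,c$ of $\det(x,y,z)=axy+bxz+cyz$, I may assume without loss of generality that $c=0$, so that $\det(x,y,z)=x(ay+bz)$.

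With $c=0$ one has $\det(0,s,t)=0$ for all $s,t$, so every matrix $sC_2+tC_3$ has rank at most one; equivalently $V:=\mathrm{span}\{C_2,C_3\}$, and every translate of $V$, is a rank-one plane. (If one prefers, this also follows from case 1 of Lemma \ref{L:quadraticsurface} applied to the faces of the cube, whose two diagonals both then have vanishing determinant.) In particular the two faces $F_\pm:=\pm C_1+V$, which contain precisely the four vertices $X_\epsilon$ with $\epsilon_1=\pm1$, are rank-one planes. I would then write $\nu=\tfrac12\hat\nu^++\tfrac12\hat\nu^-$, where $\hat\nu^\pm$ is the probability measure obtained by normalising the restriction of $\nu$ to $F_\pm$; a short computation with the weights $\alpha,\beta,\beta,\alpha$ appearing on each face gives $\overline{\hat\nu^+}=C_1$ and $\overline{\hat\nu^-}=-C_1$.

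To finish, I would note that any rank-one convex $f$ is convex along every line lying in a rank-one plane (each nonzero direction in the plane being a rank-one matrix), hence $f$ restricted to the two-dimensional affine plane $F_\pm$ is convex, so Jensen's inequality yields $f(\overline{\hat\nu^\pm})\le\int f\,d\hat\nu^\pm$ and $\hat\nu^\pm\in\mathcal L$. Since $C_1$ is itself rank-one, $t\mapsto f(tC_1)$ is convex, and therefore
$$
\int f\,d\nu=\tfrac12\int f\,d\hat\nu^++\tfrac12\int f\,d\hat\nu^-\ \ge\ \tfrac12 f(C_1)+\tfrac12 f(-C_1)\ \ge\ f(0)=f(\overline\nu),
$$
so $\nu$ is a laminate. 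I expect the only genuinely delicate point to be the bookkeeping in the reduction to $c=0$ together with the degenerate sub-cases (for instance $a$ vanishing as well, or two of the $C_i$ being linearly dependent); in each of these the same splitting still works, since the argument used nothing about $F_\pm$ beyond their being rank-one planes, which remains true.
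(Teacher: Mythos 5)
Your proposal is correct, and it reaches the conclusion by a somewhat different mechanism than the paper. The paper normalizes so that $a=0$, exhibits the two \emph{extreme} symmetric measures $(\alpha,\beta)=(\tfrac14,0)$ and $(0,\tfrac14)$ as explicit prelaminates by splitting $\delta_0$ first along $C_3$ and then inside the rank-one plane $\{z=0\}$, and then concludes because every symmetric measure is a convex combination of these two and the laminates with barycenter $0$ form a convex set. You instead verify the defining inequality $f(0)\leq\int f\,d\nu$ directly for an \emph{arbitrary} symmetric measure: you condition $\nu$ on the two faces $\pm C_1+\mathrm{span}\{C_2,C_3\}$, observe that with $c=0$ these are affine planes on which any difference has zero determinant, so a rank-one convex $f$ is genuinely convex there (convexity being a segment-wise condition), apply Jensen on each face to get down to $\pm C_1$ (the face barycenters, by the $\alpha,\beta,\beta,\alpha$ weight pattern), and finish with convexity along the rank-one segment $[-C_1,C_1]$. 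The paper's route has the merit of producing explicit prelaminates, in keeping with the constructive splittings used later in the paper; your route is shorter, treats all symmetric measures at once without invoking convexity of $\mathcal{L}_0(K)$ or identifying extreme points, is insensitive to degeneracies such as linearly dependent $C_i$, and incidentally sidesteps the bookkeeping of which vertices a splitting lands on (the paper's displayed splitting actually produces mass on $X_{++-},X_{---}$ rather than $X_{+--},X_{-+-}$, a harmless slip that your argument never has to confront). All the steps you flag as delicate check out: the symmetric class is indeed permutation-invariant, each face carries total mass $\tfrac12$ because $\alpha+\beta=\tfrac14$, and the face barycenters are exactly $\pm C_1$.
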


\begin{proof}
We may assume without loss of generality that $a=0$. Then the $\{z=0\}$ plane is a rank-one plane, and symmetric measures on $K$ may be obtained by splitting (c.f. Definition \ref{d:laminate}) as follows:
\begin{align*}
\delta_{(0,0,0)}&\mapsto \frac{1}{2}\delta_{(0,0,1)}+\frac{1}{2}\delta_{(0,0,-1)}\\
&\mapsto \frac{1}{2}\left(\frac{1}{2}\delta_{(1,1,1)}+\frac{1}{2}\delta_{(-1,-1,1)}\right)+\frac{1}{2}\left(\frac{1}{2}\delta_{(1,1,-1)}+\frac{1}{2}\delta_{(-1,-1,-1)}\right).
\end{align*}
This is a symmetric laminate in the sense of Definition \ref{d:symmetric} with $\alpha=1/4$ and $\beta=0$. In a similar way we can obtain a symmetric laminate with $\alpha=0$ and $\beta=1/4$. Since symmetric laminates form a convex set and every symmetric measure can be written as a convex combination of these two laminates, we are done.
\end{proof}

In light of the preceding lemma, in the following we will assume $a,b,c\neq 0$.
Furthermore, by swapping the signs $\pm x,y,z$ and multiplying $X_i$ by $J$ if necessary, we can assume without loss of generality that
\begin{equation}\label{abc}
a,b,c>0.
\end{equation}

\begin{remark}
We note in passing that the assumption \eqref{abc} in particular implies that $\{C_1,C_2,C_3\}$ is linearly independent. Indeed, if 
$xC_1+yC_2=C_3$, then $0=\det C_3=axy$ and consequently $x=0$ or $y=0$. If (without loss of generality) $y=0$, then $0=\det(xC_1-C_3)=-bx$, hence $x=0$. 
\end{remark}

Our aim in this section is to construct symmetric laminates supported on the vertices of the cube $K$. The following lemma, which gives a simpler criterion for laminates to be symmetric, will be useful. 
\begin{lemma}\label{l:symmetric}
If $\nu$ is a laminate supported on $K$ with barycenter $\bar{\nu}=0$, and if 
$$
\nu_{+--}=\nu_{-+-}=\nu_{--+},
$$
then $\nu$ is symmetric.
\end{lemma}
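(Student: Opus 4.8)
The plan is to use the two linear constraints that any laminate supported on $K$ with barycenter $0$ must satisfy — the barycenter condition $\sum_\epsilon \nu_\epsilon X_\epsilon = 0$ and the determinant (polyconvexity) condition $\sum_\epsilon \nu_\epsilon \det X_\epsilon = 0$ — together with $\sum_\epsilon\nu_\epsilon = 1$ and the three equalities $\nu_{+--}=\nu_{-+-}=\nu_{--+}=:\alpha$. The goal is to deduce that $\nu_{+++}=\alpha$ as well, and that the remaining four weights $\nu_{---},\nu_{++-},\nu_{+-+},\nu_{-++}$ are all equal to some common value $\beta$ with $\alpha+\beta=1/4$, which is exactly Definition \ref{d:symmetric}.

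First I would write out the barycenter equation in the coordinates $(x,y,z)\sim xC_1+yC_2+zC_3$. Since $\{C_1,C_2,C_3\}$ is linearly independent (by the Remark following \eqref{abc}), this gives three scalar equations: the sum of $\epsilon_1\nu_\epsilon$, the sum of $\epsilon_2\nu_\epsilon$, and the sum of $\epsilon_3\nu_\epsilon$ all vanish. Labelling the eight weights and substituting $\nu_{+--}=\nu_{-+-}=\nu_{--+}=\alpha$, these three equations become linear relations among $\nu_{+++}$, the three $\alpha$'s, and the four "$\beta$-type" weights $\nu_{---},\nu_{++-},\nu_{+-+},\nu_{-++}$. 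Combining them appropriately (for instance, adding or subtracting pairs) I expect to extract first that $\nu_{+++}=\alpha$ and then that the four $\beta$-type weights coincide pairwise, and finally — using all three together — that they are all equal to a single $\beta$. The normalization $\sum\nu_\epsilon=1$ then forces $4\alpha+4\beta=1$, i.e. $\alpha+\beta=1/4$.

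The determinant condition $\sum_\epsilon\nu_\epsilon\det X_\epsilon=0$ is available as a fourth ingredient in case the barycenter equations alone do not pin down all the weights; using the explicit values $\det(\pm1,\pm1,\pm1)=\pm a\pm b\pm c$ computed just above the lemma, together with $a,b,c>0$, this should remove any residual degree of freedom. I expect the main (minor) obstacle to be purely bookkeeping: keeping track of the eight sign patterns $\epsilon\in\{-1,+1\}^3$ and checking that the linear system built from the barycenter equations, the three hypotheses $\nu_{+--}=\nu_{-+-}=\nu_{--+}$, and possibly the determinant constraint, has rank high enough to collapse the weights into the two-value symmetric pattern. There is no analytic difficulty here — it is a small finite-dimensional linear algebra computation, and the only care needed is to verify that the hypotheses genuinely suffice (rather than, say, leaving $\nu_{+++}$ free), which is where the linear independence of $\{C_1,C_2,C_3\}$ and the sign conditions \eqref{abc} enter.
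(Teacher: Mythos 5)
Your plan is essentially the paper's proof: write the two moment constraints \eqref{e:pc-constraints} (valid for any laminate with barycenter $0$, since $\pm\det$ is rank-one convex) in the coordinates given by $C_1,C_2,C_3$, add the normalization and the hypothesis $\nu_{+--}=\nu_{-+-}=\nu_{--+}=\alpha$, and solve the resulting small linear system using \eqref{abc}. One correction to your anticipated bookkeeping, though: the determinant constraint is not an optional ``fourth ingredient'' to be invoked only if needed --- it is indispensable, and the barycenter equations do \emph{not} yield $\nu_{+++}=\alpha$ nor that all four remaining weights coincide. What the three barycenter equations give (after subtracting them pairwise) is only $\nu_{++-}=\nu_{+-+}=\nu_{-++}=:\gamma$ together with the single relation $\nu_{+++}+\gamma=\alpha+\nu_{---}$, which leaves a one-parameter family of non-symmetric solutions: for instance $\tfrac12\delta_{X_{+++}}+\tfrac12\delta_{X_{---}}$ satisfies the barycenter condition, the normalization and your hypothesis (with $\alpha=0$) yet is not symmetric --- it is of course not a laminate, precisely because it violates the determinant constraint. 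In the paper it is the second equation of \eqref{e:pc-constraints}, which reduces to $(a+b+c)(\nu_{---}-\gamma)=0$, combined with $a+b+c>0$ from \eqref{abc}, that removes this residual freedom; $\nu_{+++}=\alpha$ and $\alpha+\beta=1/4$ then follow by back-substitution. With that reallocation of roles your computation goes through and coincides with the paper's argument.
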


\begin{proof}
Let us set $\nu_{---}=\beta$ and $\nu_{+--}=\nu_{-+-}=\nu_{--+}=\alpha$ (cf. Figure \ref{f:symmetric}).
Observe that, since $X\mapsto \pm\det X$ is rank-one convex, any laminate $\nu$ supported on $K$ and with barycenter $\bar{\nu}=0$ satisfies the equations \eqref{e:pc-constraints} with $N=3$. The first equation in  \eqref{e:pc-constraints}  amounts to
\begin{align*}
\nu_{+++}+\nu_{++-}+\nu_{+-+}+\beta&=2\beta+\alpha+\nu_{-++}\\
\nu_{+++}+\nu_{-++}+\nu_{++-}+\beta&=2\beta+\alpha+\nu_{+-+}\\
\nu_{+++}+\nu_{-++}+\nu_{+-+}+\beta&=2\beta+\alpha+\nu_{++-}.
\end{align*}
These three equations quickly lead to
$$
\nu_{++-}=\nu_{+-+}=\nu_{-++}=:\gamma.
$$
Then, the second equation in \eqref{e:pc-constraints} gives
$$
(a+b+c)(\alpha+2\beta-\gamma)+(\alpha+\gamma)(-a-b-c)=0,
$$
hence $(a+b+c)(\beta-\gamma)=0$. Since $a,b,c>0$, this implies $\gamma=\beta$. 
Back-substitution then yields $\nu_{+++}=\alpha$. 
\end{proof}

Next, we look for special laminates supported on minimal subsets of $K$. It will be convenient
from now on to switch the notation and  
write $X_0=(1,1,1)$, $X_1=(-1,1,1)$, $X_2=(1,-1,1)$ and $X_3=(1,1,-1)$. 
For later use we record
\begin{equation*}
\begin{split}
\det (X_1-X_2)&=-4a,\,\det (X_1-X_3)=-4b,\,\det (X_2-X_3)=-4c\\
\det(X_1+X_0)&=4c,\,\det(X_2+X_0)=4b,\,\det(X_3+X_0)=4a.
\end{split}
\end{equation*}

\medskip

We start with the following observation.
\begin{lemma}\label{lem:P}
Suppose $a,b,c>0$ and in addition $c<a+b$, i.e.~$\det X_1<0$. 
Then there exists $P\in [-X_0,-X_1]$ such that $\det(P-X_1)=0$, and 
furthermore
$$
0\in\{X_0,X_1,X_2,X_3,P\}^{lc}.
$$
\end{lemma}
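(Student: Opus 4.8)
The plan is to exhibit $0$ explicitly as the barycenter of a laminate supported in $\{X_0,X_1,X_2,X_3,P\}$ by a short sequence of rank-one splittings, using the hypothesis $\det X_1<0$ (equivalently $c<a+b$) to locate the auxiliary point $P$. First I would produce $P$: since $X\mapsto\det X$ restricted to the segment $[-X_0,-X_1]$ is an affine-plus-quadratic function of the parameter, the function $t\mapsto \det\bigl((1-t)(-X_0)+t(-X_1)-X_1\bigr)$ should be checked at the two endpoints. At $t=1$ one gets $\det(-2X_1)=4\det X_1<0$, and at $t=0$ one gets $\det(-X_0-X_1)$; one computes $\det(-X_0-X_1)=\det(X_0+X_1)$, and since $X_0+X_1=(0,2,2)$ this equals $4c>0$ (alternatively read it off from the recorded identity $\det(X_1+X_0)=4c$ together with homogeneity). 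By the intermediate value theorem there is $t\in(0,1)$ with $\det(P-X_1)=0$ where $P=(1-t)(-X_0)+t(-X_1)$; note $\rank(P-X_1)=1$, so $[P,X_1]$ is a rank-one segment, and also $P\in[-X_0,-X_1]$ as claimed.

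The second step is the splitting construction. The idea is: $0$ is the midpoint of $[X_1,-X_1]$ — but $-X_1$ need not be in $K$, so instead use that $-X_1$ lies on the rank-one segment $[P,X_1]$? No — rather, split $0$ first along a rank-one direction into a point of $K$ and a point from which the remaining mass can be resolved. Concretely, I expect the following to work: since $P\in[-X_0,-X_1]$, write $-X_0 = P + s(X_1-X_0)\cdot(\text{something})$; more cleanly, $0$ can be split as $0 = \tfrac12 X_1 + \tfrac12(-X_1)$, and then $-X_1$, being the reflection of $X_1$, must be realized — but it isn't a vertex. The correct route uses $P$: observe $0$ lies on the segment from $X_1$ to $-X_1$, and $-X_1\in[-X_0, \text{the point } 2P-(-X_0)]$... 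Let me instead follow the structure the lemma is clearly set up for. Since $\det(P-X_1)=0$, the pair $(P,X_1)$ is rank-one connected; since $P\in[-X_0,-X_1]$, write $P=\lambda(-X_0)+(1-\lambda)(-X_1)$ for some $\lambda\in(0,1)$. Then the measure $\mu_1=\lambda\delta_{-X_0}+(1-\lambda)\delta_{-X_1}$ has barycenter $P$. Now $-X_0$ and $-X_1$ are each rank-one connected to vertices of $K$: indeed $-X_0$ is the midpoint of $[X_1,-2X_1+(\ldots)]$ — better, $-X_0 = \tfrac12 X_2 + \tfrac12 X_3$ since $X_2+X_3=(2,0,0)=-2X_1+\ldots$ hmm, $X_2+X_3=(2,0,0)$ and $-2X_0=(-2,-2,-2)$, not equal. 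I would instead directly verify the two usable rank-one relations $-X_0=\tfrac12 X_1+\tfrac12(X_2+X_3-X_1)$-type identities by checking determinants of differences against the recorded list $\det(X_i-X_j)$, $\det(X_i+X_0)$.

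\textbf{The cleanest scheme}, and the one I would write up, is: split $\delta_0 \mapsto \tfrac12\delta_{X_1} + \tfrac12\delta_{-X_1}$ (valid iff $\rank(2X_1)=1$, i.e.\ always, since $X_1$ is rank one? no — $X_1=(-1,1,1)$ has $\det X_1 = -a-b+c\neq 0$ in general, so this split is \emph{not} rank-one). So that naive step fails, which is exactly why $P$ is introduced. The genuine plan: split $\delta_0$ along the rank-one segment through $X_1$ and $P'$ where $P'=-X_1\cdot\frac{|0-X_1|}{\ldots}$ — i.e.\ since $0,X_1$ and the opposite end must be collinear \emph{and} rank-one connected, use $\det(X_1-P)=0$: the line through $X_1$ and $P$ passes near $0$? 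One checks whether $0\in[X_1,P]$ or $0\in[X_1, 2\cdot 0 - \text{ext}]$; if $0\in[X_1,P]$ we are essentially done after one more split of $\delta_P=\lambda\delta_{-X_0}+(1-\lambda)\delta_{-X_1}$ followed by resolving $-X_0,-X_1$ into $K$-vertices via the midpoint relations $-X_0=\tfrac12 X_1 + \tfrac12(-X_0-X_1+X_1)$. In summary: (1) get $P$ by IVT; (2) check $0\in[X_1,P]$, giving $\delta_0 = \theta\delta_{X_1}+(1-\theta)\delta_P$ as a rank-one split; (3) resolve $\delta_P$ by the rank-one splits of $-X_0$ and $-X_1$ into vertices of $K$, each of which is a midpoint of a rank-one connected pair from $K$ by the recorded determinant identities; (4) conclude the resulting prelaminate has barycenter $0$ and support in $\{X_0,X_1,X_2,X_3,P\}$, hence $0$ lies in that lamination-convex hull. \textbf{The main obstacle} I anticipate is step (2)–(3): verifying that $0$ actually lies on the rank-one segment $[X_1,P]$ (rather than on its extension), and that the endpoints $-X_0,-X_1$ genuinely split into $K$ through rank-one connections — this requires a careful sign bookkeeping using $c<a+b$ and $a,b,c>0$, and it is conceivable one instead needs a two-step splitting on the $P$-side; the determinant identities $\det(X_i-X_j)=-4a,-4b,-4c$ and $\det(X_i+X_0)=4c,4b,4a$ are precisely the inputs that make this bookkeeping go through.
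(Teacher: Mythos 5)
Your first step (locating $P$) is fine and agrees with the paper: along the rank-one segment $[-X_0,-X_1]$ the determinant of $\,\cdot - X_1$ is affine, it equals $\det(X_0+X_1)=4c>0$ at $-X_0$ and $4\det X_1=4(c-a-b)<0$ at $-X_1$, so $P$ exists; explicitly $P=\lambda(-X_0)+(1-\lambda)(-X_1)$ with $\lambda=\frac{a+b-c}{a+b}$. But the core of the lemma --- $0\in\{X_0,X_1,X_2,X_3,P\}^{lc}$ --- is exactly where your plan breaks down, at the steps you yourself flagged as uncertain. (a) $0$ does \emph{not} lie on the rank-one segment $[X_1,P]$: in the $(x,y,z)$ coordinates $X_1=(-1,1,1)$ and $P=(1-2\lambda,-1,-1)$, so the only point of $[X_1,P]$ with vanishing second (and third) coordinate is the midpoint, whose first coordinate is $-\lambda\neq 0$ since $c<a+b$ forces $\lambda>0$. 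So the one-split scheme $\delta_0=\theta\delta_{X_1}+(1-\theta)\delta_P$ is unavailable. (b) The fallback of splitting $P$ into $-X_0,-X_1$ and then ``resolving'' these into vertices of $K$ cannot work: $-X_0$ and $-X_1$ are not in the convex hull of the five allowed points (the functional $x+y+z$ separates $-X_0$, and $y+z$ separates $-X_1$, since $y+z=-2$ on the five-point set only at $P\neq -X_1$), and in any case the lemma requires the support to stay inside $\{X_0,X_1,X_2,X_3,P\}$, so no midpoint relations of the kind you invoke exist (within $\{X_0,\dots,X_3\}$ the only rank-one connections are $X_0$ with each $X_i$).

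What the paper actually does is substantially heavier, and this is not an accident: $0$ only enters the hull at higher order. The paper constructs $P_1,P_2\in K^{lc,1}$ on $[X_1,P]$ (rank-one connected to $X_2$, resp.\ $X_3$), then $P_3\in K^{lc,2}$ on $[P_2,X_3]$, fills the four rank-one squares $\{X_0,X_2,P_3,X_3\}$, $\{X_0,X_1,P_1,X_2\}$, $\{X_0,X_1,P_2,X_3\}$, $\{X_2,P_1,P_2,P_3\}$ with doubly ruled surfaces via Lemma \ref{L:quadraticsurface} (case 3), shows their union is an embedded topological sphere contained in the hull, and then proves $0$ lies in the bounded component: this last point is itself nontrivial and is handled by a continuity argument in the parameters $(a,b,c)$ together with explicit computations excluding $0$ from each surface $\mathcal{S}_k$ under the assumptions $a,b,c>0$, $c<a+b$. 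None of this machinery (iterated hull points, the quadratic-surface lemma, the sphere/Jordan--Brouwer and continuity argument) is present in your proposal, so as it stands the proof has a genuine gap precisely at the statement $0\in\{X_0,X_1,X_2,X_3,P\}^{lc}$; the sign bookkeeping you hoped would rescue steps (2)--(3) in fact rules them out.
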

\begin{figure}[H]
\begin{center}
\input{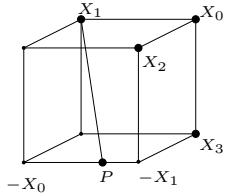}
\end{center}
\caption{The point $P\in [-X_0,-X_1]$.}
\label{f:pointP}
\end{figure}
\begin{proof}
Observe that the statement is symmetric with respect to swapping $X_2$ and $X_3$, i.e. with respect to swapping $a,b$. Therefore we may assume without loss of generality that 
$$
a\leq b.
$$ 
Let $P=\lambda(-X_0)+(1-\lambda)(-X_1)$. Then
$$
0=\lambda\det(-X_0-X_1)+(1-\lambda)\det(-X_1-X_1)=
4c\lambda +4(c-a-b)(1-\lambda)
$$
hence
\begin{equation}\label{lambda}
\lambda=\frac{a+b-c}{a+b}.
\end{equation}
See Figure \ref{f:pointP}.
Observe that $\det(P-X_0)=4(a+b)$. Now because $\det(X_1-X_2)=-4a<0$ and 
$$
\det(P-X_2)=\lambda\det(-X_0-X_2)+(1-\lambda)\det(-X_1-X_2)=4\lambda b>0,
$$
the point $P_1=\lambda_1X_1+(1-\lambda_1)P$ defned by
$$
\lambda_1=\frac{\lambda b}{a+\lambda b}
$$
is a point on the segment $[X_1,P]$ with $\det (P_1-X_2)=0$. 
Similarly $P_2=\lambda_2X_1+(1-\lambda_2)P$, where
$$
\lambda_2=\frac{\lambda a}{b+\lambda a}.
$$
Since we assumed that $a\leq b$, we have $\lambda_1\geq \lambda_2$. Then
$$
\det(X_2-P_2)=\frac{4\lambda(b^2-a^2)}{b+\lambda a}>0
\text{ and }
\det(X_2-X_3)=-4c
$$
so that there exists $P_3\in [P_2,X_3]$ with $\det(P_3-X_2)=0$.

In fact a simple calculation shows that
$P_3=\lambda_3X_3+(1-\lambda_3)P_2$, where
$$
\lambda_3=\frac{(a+b-c)(b-a)}{b^2-a^2+(1+\lambda)ac}.
$$
In particular if $a=b$, then $P_1=P_2=P_3$. Let us summarize so far. By construction we have
$P_1,P_2\in K^{lc,1}$, $P_3\in K^{lc,2}$ and furthermore
$$
\det(X_0-P_k)>0\text{ and }\det(X_i-X_j)<0,
$$
and
$$
\det(X_2-P_2)>0\text{ and }\det(P_1-P_3)<0
$$
(we get these by using repeatedly that the determinant is linear when restricted to rank-one lines). 

Now we can find the third lamination hull using Lemma \ref{L:quadraticsurface}. 
Indeed, for any of the following 4-tuples
\begin{equation*}
\{X_0,X_2,P_3,X_3\},\,\{X_0,X_1,P_1,X_2\},\,\{X_0,X_1,P_2,X_3\},\,\{X_2,P_1,P_2,P_3\}
\end{equation*}
case 3.~of Lemma \ref{L:quadraticsurface} applies and yields a ``filling'' of the corresponding rank-one square with
doubly ruled surfaces $\mathcal{S}_1,\dots,\mathcal{S}_4$, which are contained in the lamination-convex hull. See Figure \ref{f:Si}. Observe that any two such surfaces may intersect only along the common rank-one edge. To see this, consider for definiteness 
$X\in\mathcal{S}_1\cap\mathcal{S}_2$ and assume that $X\notin [X_0,X_2]$. In Lemma \ref{L:quadraticsurface} we showed that there exists a unique $R\in [X_0,X_2]$ with $\det(X-R)=0$, and there exist $Q_1\in [P_3,X_3]$, $Q_2\in [P_1,X_1]$ such that $X\in [Q_1,R]\cap [Q_2,R]$. But 
then either $Q_1\in [Q_2,R]\subset \mathcal{S}_2$ or $Q_2\in [Q_1,R]\subset \mathcal{S}_1$. Either case leads to a contradiction, as 
$[P_3,X_3]\cap [P_1,X_1]=\emptyset$. The intersection of any other pair $\mathcal{S}_i\cap\mathcal{S}_j$ can be handled in a similar fashion.

\begin{figure}[H]
\begin{center}
\input{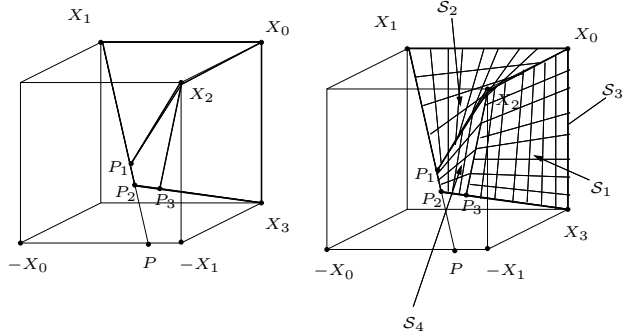}
\end{center}
\caption{The surface $\mathcal{S}=\bigcup_i\mathcal{S}_i$ and its 2-cell embedding.}
\label{f:Si}
\end{figure}

Consequently $\mathcal{S}=\bigcup_{i=1}^4\mathcal{S}_i$ is a regular (i.e. embedded) compact, piecewise smooth surface without boundary, with the rank-one edges of the surfaces $\mathcal{S}_i$ forming a canonical 2-cell embedding. It follows that therefore $\mathcal{S}$ is a topological sphere. By the Jordan-Brouwer theorem $\R^3\setminus\mathcal{S}$ consists of precisely two connected components, an ``inside'' $\mathcal{U}$ and an ``outside'' . Obviously any point $Q\in \mathcal{U}$ is contained on a rank-one segment connecting points on $\mathcal{S}=\partial\mathcal{U}$, therefore $\overline{\mathcal{U}}\subset K^{lc}$. Hence, 
in order to complete the proof, we need
to show that $0\in \mathcal{U}$.

Note that, by construction, the surface $\mathcal{S}$ depends continuously on the parameters $a,b,c$ in the set 
\begin{equation}\label{e:assumptions}
a,b,c>0,\quad c< a+b,\quad a\leq b.
\end{equation}
Furthermore, it is easy to check that in the case $a=b=c$, we have 
$P_1=P_2=P_3=\frac{-1}{3}(1,1,1)$. Hence in this case the line $(t,t,t)$ $(t\in\R)$ intersects $\mathcal{S}$ in precisely two points: $X_0=(1,1,1)$ and $P_1$, and consequently $0\in \mathcal{U}$. 
In fact it is not difficult to check that in this case $\overline{\mathcal{U}}=\left\{X_0, X_1, X_2, X_3, P_2\right\}^{pc}$, and we have
\begin{align*}
\lambda_0X_0+\lambda_1X_1+\lambda_2X_2+\lambda_3X_3+\lambda_4P_2&=0\\
\lambda_0\det X_0+\lambda_1\det X_1+\lambda_2\det X_2+\lambda_3\det X_3+\lambda_4\det P_2&=0\\
\end{align*}
with $\lambda_0 = \frac{9}{16}$, $\lambda_1 = \frac{1}{8}$, $\lambda_2 = \frac{1}{16}$, $\lambda_3 = \frac{1}{8}$, $\lambda_4 = \frac{1}{8}$ , showing that $0\in \mathcal{U}=\textrm{int}\left\{X_0, X_1, X_2, X_3, P_2\right\}^{pc}$ (this should be compared with calculations made in \cite{Pedregal:1998vt}).

Now let us find the instances when $0$ is on the boundary (in other words $0\in\mathcal{S}_k$ for some $k=1,2,3,4$). We may assume that $\lambda>0$, otherwise $0\in[X_1,P]$. But then, as $\mathcal{S}_2$ lies above the plane $z=y$ (more precisely, $z-y\geq 0$ for all $(x,y,z)\in \mathcal{S}_2$ with equality $z=y$ only if $(x,y,z)\in [X_0,X_1]\cup [X_1,P_1]$), it cannot contain $0$. Similarly, $0\notin\mathcal{S}_3$.

Suppose $0\in\mathcal{S}_4$. Then there exists, by definition of $\mathcal{S}_4$ (c.f. Lemma \ref{L:quadraticsurface}) $Q\in [P_1,P_2]$ 
with $\det(Q)=0$, and moreover, $Q$ is the unique point on the line segment $[X_1,P]$ with this property (since $\det(X_1-P)=0$). 
On the other hand we can calculate that $\frac{X_1+P}{2}=\frac{\lambda}{2}(X_1-X_0)$, so that $\det(\tfrac{X_1+P}{2})=0$. Therefore $Q=\tfrac{X_1+P}{2}$. From Lemma \ref{L:quadraticsurface} we also know that the rank-one line containing the segment $[\frac{X_1+P}{2},0]$ also needs to intersect $[X_2,P_3]$. In particular the orthogonal projection of $[X_2,P_3]$ onto the $\{x=0\}$ plane contains the origin. It is a simple matter to check that this is only possible if either $P_3=X_3$ (in which case $\det(X_2-X_3)=0$, i.e. $c=0$), or $P_2=\frac{X_1+P}{2}$. In the latter case $\lambda_2=1/2$, hence -- since we assumed $a\leq b$ --, $\lambda=1$ and $a+b=c$, contradicting our assumptions above. 

Finally, let us look at what happens if $0\in\mathcal{S}_1$. Then $0$ is on a rank-one segment $[Q_1,Q_2]$ connecting $[X_0,X_2]$ to $[X_3,P_2]$, see Figure \ref{f:finalcase}.

\begin{figure}[H]
\begin{center}
\input{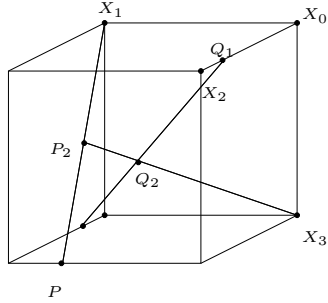}
\end{center}
\caption{The case $0\in \mathcal{S}_1$.}
\label{f:finalcase}
\end{figure}
Here $Q_1=\tilde\lambda X_0+(1-\tilde\lambda)X_2$ with 
$\tilde\lambda=\frac{a+c-b}{2(a+c)}$, and 
$$
Q_2=tX_3+(1-t)P_2=-sQ_1.
$$
Solving first for the first and third coordinate of $Q_2$ we obtain
$$
s=\frac{b}{2a+b}\text{ and }t=\frac{(a+b)(a-c)}{(a+b-c)(2a+b)}
$$
and substituting into the second coordinate we obtain 
$$
\frac{a^2-b^2-c^2}{(a+c)(a+b-c)}=0.
$$
But we assumed that $a\leq b$, hence this forces $c=0$ and $a=b$ again. 

We can conclude that under the assumptions \eqref{e:assumptions} the origin $0$ cannot lie on any one of the quadratic surfaces $\mathcal{S}_1,\dots,\mathcal{S}_4$. Consequently, for any $a,b,c$ satisfying \eqref{e:assumptions}, $0$ is contained in the ``inside'' $\mathcal{U}$, hence in the interior of the lamination-convex hull of $\{X_0,X_1,X_2,X_3,P\}$. This concludes the proof. 
\end{proof}

We are now ready to prove our main theorem, which we restate for the convenience of the reader. In the following it will be convenient to use the notation, that, given a probability measure $\nu$ supported on $$
K=\{X_{\epsilon}:\,\epsilon\in\{-1,+1\}^3\},
$$ 
we denote the mass of each point $X_{\epsilon}$ by $\nu(X_{\epsilon})$.

\begin{theorem}
The set $K$ supports symmetric laminates with barycenter $0$ and ratio
$$
\frac{\nu(X_0)}{\nu(X_1)}\leq \frac{1}{3}.
$$
In particular, given any $C_1,C_2,C_3$ rank-1 matrices in $\R^{2\times 2}$, for any rank-one convex function $f:\R^{2\times 2}\to\R$ we have
\begin{equation*}
\begin{split}
f(0)\leq&\frac{1}{16}\bigl(f(X_{+++})+f(X_{+--})+f(X_{-+-})+f(X_{--+})\bigr)+\\
&\frac{3}{16}\bigl(f(X_{---})+f(X_{++-})+f(X_{+-+})+f(X_{-++})\bigr),
\end{split}
\end{equation*}
where $X_{\pm\pm\pm}$ denotes the matrix $\pm C_1\pm C_2\pm C_3$.
\end{theorem}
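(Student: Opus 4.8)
The plan is to realize the symmetric measure of \eqref{e:main} as a convex combination of ``minimal'' prelaminates furnished by Lemma~\ref{lem:P}, and to pin down its parameter by means of Lemma~\ref{l:symmetric}. First I would dispose of the special cases: if $abc=0$ the statement is the preceding lemma, so assume $a,b,c>0$. Adding any two of the inequalities $\det X_1\ge0$, $\det X_2\ge0$, $\det X_3\ge0$ (recall $\det X_1=c-a-b$, $\det X_2=b-a-c$, $\det X_3=a-b-c$) would force one of $a,b,c$ to be nonpositive, so at most one of the three is nonnegative. I would then distinguish the \emph{acute} case, in which all three are negative, i.e.\ the triangle inequalities $a<b+c$, $b<a+c$, $c<a+b$ hold, from the \emph{obtuse} case, in which exactly one of them, say $\det X_3=a-b-c\ge0$, is nonnegative.

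In the acute case, apply Lemma~\ref{lem:P} three times, once for each index $i\in\{1,2,3\}$ (the statement is written for $i=1$, but is clearly invariant under permuting the roles of $C_1,C_2,C_3$, which permutes $(a,b,c)$). This produces points $P^{(i)}$ on the cube edge $[-X_0,-X_i]$ with $\det(P^{(i)}-X_i)=0$ and $0$ in the interior of $\{X_0,X_1,X_2,X_3,P^{(i)}\}^{lc}$. Since a point lies in the lamination-convex hull of a finite set precisely when it is the barycenter of a prelaminate supported on that set, we get prelaminates $\mu^{(i)}$ with barycenter $0$ supported on those five points; splitting the atom of $\mu^{(i)}$ at $P^{(i)}$ along the rank-one segment $[-X_0,-X_i]$ (whose endpoints are vertices of $K$) yields a prelaminate $\tilde\mu^{(i)}$ with barycenter $0$ supported on six vertices of $K$. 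With $X_0=X_{+++}$, $X_1=X_{-++}$, $X_2=X_{+-+}$, $X_3=X_{++-}$, a glance at the supports shows that $\tilde\mu^{(i)}$ lives on the $\beta$-vertices, on $X_{+++}$, and, among the three $\alpha$-vertices $X_{+--},X_{-+-},X_{--+}$, only on $-X_i$, with mass there $m_1=\tfrac{c}{a+b}\mu^{(1)}(P^{(1)})$, $m_2=\tfrac{b}{a+c}\mu^{(2)}(P^{(2)})$, $m_3=\tfrac{a}{b+c}\mu^{(3)}(P^{(3)})$; all three are positive, since (as $a,b,c\neq0$) $0$ is not contained in the lamination-convex hull of the ``claw'' $\{X_0,X_1,X_2,X_3\}$.

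Now put $\nu=t_1\tilde\mu^{(1)}+t_2\tilde\mu^{(2)}+t_3\tilde\mu^{(3)}$ with $t_i\ge0$, $t_1+t_2+t_3=1$; this is a laminate with barycenter $0$ supported on all of $K$. By the support analysis, $\nu_{+--}=t_1m_1$, $\nu_{-+-}=t_2m_2$, $\nu_{--+}=t_3m_3$, so the two equations $\nu_{+--}=\nu_{-+-}=\nu_{--+}$ required by Lemma~\ref{l:symmetric} have the unique admissible solution $t_i=\alpha/m_i$ with $\alpha=(1/m_1+1/m_2+1/m_3)^{-1}$; for this choice Lemma~\ref{l:symmetric} shows $\nu$ is symmetric with parameter $\alpha$ (so $\beta=\tfrac14-\alpha$). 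It then remains to verify $1/m_1+1/m_2+1/m_3\ge16$, equivalently $\alpha\le\tfrac1{16}$, i.e.\ $\nu(X_0)/\nu(X_1)=\alpha/\beta\le\tfrac13$. Granting this, convexity of $\mathcal L_0(K)$ together with the always-available laminate $\alpha=\beta=\tfrac18$ gives the symmetric laminate with $\alpha=\tfrac1{16}$, which is precisely \eqref{e:main}; and the central symmetry $X\mapsto-X$, which preserves rank-one convexity while interchanging $\alpha$ and $\beta$, then yields symmetric laminates for the full range of ratios.

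The inequality $1/m_1+1/m_2+1/m_3\ge16$ is the main obstacle. As $\mu^{(i)}$ is the (essentially unique) measure on its five points with barycenter $0$ and vanishing average determinant, each $\mu^{(i)}(P^{(i)})$, hence each $m_i$, is an explicit scale-invariant rational function of $(a,b,c)$, and the inequality becomes an algebraic inequality on the two-dimensional projectivized acute cone; it holds with equality at $a=b=c$ (there $\mu^{(1)}(P^{(1)})=\tfrac38$, $m_i=\tfrac3{16}$, $\alpha=\tfrac1{16}$) and strictly otherwise -- for instance $1/m_1+1/m_2+1/m_3=20$ at $(a,b,c)=(1,1,\tfrac12)$ -- so one expects to close it by a symmetric-function reduction or a direct low-dimensional estimate. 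The last loose end is the obtuse case: there only $\tilde\mu^{(1)}$ and $\tilde\mu^{(2)}$ come from Lemma~\ref{lem:P}, and neither charges the vertex $X_{--+}$, so one needs an additional minimal laminate charging it, adapted to the sign $\det X_3\ge0$ -- to be produced either by re-running the construction of Lemma~\ref{lem:P} with the ``claw'' based at a different vertex of $K$, or from Lemma~\ref{L:quadraticsurface} applied to an appropriate rank-one square contained in $K$ -- and I would expect this, together with the algebraic inequality, to be where the real work lies.
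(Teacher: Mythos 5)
Your overall architecture in the acute case is exactly the paper's: three applications of Lemma \ref{lem:P}, splitting the atom at $P^{(i)}$ along $[-X_0,-X_i]$, taking the convex combination with weights proportional to $1/(\text{mass at }-X_i)$, and invoking Lemma \ref{l:symmetric}. But the two places you defer are precisely where the theorem lives, and neither is closed. First, the quantitative inequality (your $1/m_1+1/m_2+1/m_3\geq 16$, equivalently $\alpha\leq 1/16$) is only checked at two sample points and ``expected'' to follow from a symmetric-function reduction; that is the heart of the result, not a loose end. Moreover your route through the masses $m_i=(1-\lambda_i)\,\mu^{(i)}(P^{(i)})$ makes it needlessly hard: since $-X_0$ and $-X_i$ receive mass \emph{only} from splitting the single atom at $P^{(i)}$, the ratio $\tilde\mu^{(i)}(-X_0)/\tilde\mu^{(i)}(-X_i)=\lambda_i/(1-\lambda_i)$ is determined by the position of $P^{(i)}$ alone, i.e.\ equals $(a+b-c)/c$, $(a+c-b)/b$, $(b+c-a)/a$, with no need to know $\mu^{(i)}(P^{(i)})$. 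The unknown masses then cancel from the symmetric measure: $\nu(-X_0)/\nu(-X_k)=\sum_i \lambda_i/(1-\lambda_i)=(a+b+c)\bigl(\tfrac1a+\tfrac1b+\tfrac1c\bigr)-6\geq 3$ by the arithmetic--harmonic mean inequality, which is the whole proof of your inequality. Insisting instead on computing each $\mu^{(i)}(P^{(i)})$ from the $5\times 5$ linear system and proving the resulting rational inequality on the cone is both unnecessary and, as it stands, undone.

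Second, the ``obtuse'' case $\det X_i>0$ for one $i$ (say $b>a+c$) is not a cosmetic left-over: Lemma \ref{lem:P} genuinely fails for that index, and one cannot charge the missing vertex $-X_2$ by permuting the claw. The paper's fix is a different minimal laminate $\nu_2'$ with barycenter $0$ supported on $\{X_1,X_2,-X_1,-X_2\}$, obtained by splitting through the point $P=\lambda X_1+(1-\lambda)(-X_2)$ with $\det(P-X_2)=0$, $\lambda=(b-a-c)/(b-c)\in(0,1)$; since $\nu_2'$ also charges $-X_1$, the convex combination must be corrected by the factor $\bigl(1-\nu_2'(-X_1)/\nu_2'(-X_2)\bigr)$ in front of $\nu_1$ so that the masses at $-X_1,-X_2,-X_3$ still coincide, and then the ratio becomes $2a/c+(b+c-a)/a>4\geq 3$. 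Your proposal only gestures at possible constructions here. The remaining steps you do state---the $abc=0$ lemma, positivity of the masses at $-X_i$, symmetry via Lemma \ref{l:symmetric}, and interpolation with the uniform symmetric laminate $\alpha=\beta=1/8$ to land exactly on the weights of \eqref{e:main}---are fine, but as submitted the proof has the two substantive gaps above.
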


\begin{proof}
As in the beginning of this section, we may assume without loss of generality that $a,b,c>0$. In particular this means that $\det X_0>0$, and then there are two cases depending on the signs of $\det X_i$ for $i=1,2,3$.
\begin{enumerate}
\item $\det X_i<0$ for all $i$,
\item $\det X_i>0$ for some $i$.
\end{enumerate}

\noindent{\bf Case 1.}
If $\det X_1,\det X_2,\det X_3<0$, corresponding to 
$$
a+b>c,\,\, a+c>b\,\text{ and } b+c>a,
$$
then Lemma \ref{lem:P} implies the existence of $P_1,P_2,P_3$ on the segments
$[-X_0,-X_1]$, $[-X_0,-X_2]$, $[-X_0,-X_3]$ respectively, with the property that
$$
0\in\{X_0,X_1,X_2,X_3,P_k\}^{lc}.
$$ 
Let $\tilde\nu_1$ be a laminate supported on $\{X_0,X_1,X_2,X_3,P_1\}$ with barycenter $0$. As $P_1$ lies on the rank-one segment $[-X_0,-X_1]$, more precisely (c.f.~(\ref{lambda}))
$$
P_1=\lambda(-X_0)+(1-\lambda)(-X_1)
$$
with $\lambda=\frac{a+b-c}{a+b}$, we can split along this segment 
to obtain the laminate $\nu_1$ supported on the set
$$
K_1=\{X_0,X_1,X_2,X_3,-X_0,-X_1\}
$$
with barycenter $0$. Furthermore,
$$
\frac{\nu_1(-X_0)}{\nu_1(-X_1)}=\frac{\lambda}{1-\lambda}=\frac{a+b-c}{c}.
$$

In a similar manner we obtain the laminates 
$\nu_2$ and $\nu_3$ with barycenter $0$, supported on $K_2$ and $K_3$ respectively, where
$$
K_i=\{X_0,X_1,X_2,X_3,-X_0,-X_i\},
$$
and such that 
$$
\frac{\nu_2(-X_0)}{\nu_2(-X_2)}=\frac{a+c-b}{b}\,\text{ and }
\frac{\nu_3(-X_0)}{\nu_3(-X_3)}=\frac{b+c-a}{a}.
$$
To obtain a symmetric laminate, we form the convex combination:
$$
\nu=C\bigl(\frac{1}{\nu_1(-X_1)}\nu_1+\frac{1}{\nu_2(-X_2)}\nu_2
+\frac{1}{\nu_3(-X_3)}\nu_3\bigr)
$$
where $C>0$ is the normalizing factor
$$
C=\frac{1}{\frac{1}{\nu_1(-X_1)}+\frac{1}{\nu_2(-X_2)}+\frac{1}{\nu_3(-X_3)}}.
$$
Note that $\nu(-X_i)=C$ for all $i=1,2,3$, hence $\nu$ is symmetric in light of Lemma \ref{l:symmetric}. Furthermore
\begin{equation*}
\begin{split}
\frac{\nu(-X_0)}{\nu(-X_k)}&=\sum_{i=1}^3\frac{\nu_i(-X_0)}{\nu_i(-X_i)}\\
&=\frac{a+b}{c}+\frac{a+c}{b}+\frac{b+c}{a}-3\\
&=(a+b+c)\left(\frac{1}{a}+\frac{1}{b}+\frac{1}{c}\right)-6\\
&\geq 9-6=3,
\end{split}
\end{equation*}
where in the last line we used the harmonic-arithmetic mean inequality. In particular we see that 
$$
\min_{a,b,c>0}\frac{\nu(-X_0)}{\nu(-X_k)}=
\frac{\nu(-X_0)}{\nu(-X_k)}\biggl|_{a=b=c}=3,
$$
so that 
$$
\nu(-X_0)\geq 3\nu(-X_k).
$$

\smallskip

\noindent{\bf Case 2.}
Suppose $\det X_2>0$, i.e.~$b>a+c$. Then in particular, as $a,b,c>0$, necessarily $a+b>c$ and $b+c>a$, so that $\det X_1<0$ and $\det X_3<0$. Because $\det X_2>0$, there is no rank-one line from $X_2$
hitting the segment $[-X_0,-X_2]$, but instead there is one hitting the segment
$[X_1,-X_2]$. Indeed, setting $P=\lambda X_1+(1-\lambda)(-X_2)$ and requiring $\det(P-X_2)=0$, we obtain
$$
0=4(b-a-c)-4\lambda (b-c),
$$
hence $\lambda=\frac{b-a-c}{b-c}$. Note that $b-c>a$ by assumption, so that $\lambda\in (0,1)$.

\begin{figure}[H]
\begin{center}
\input{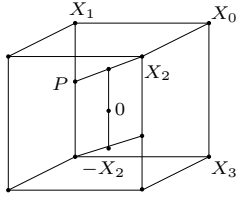}
\end{center}
\caption{Obtaining $\nu_2'$ in Case 2.}
\label{f:case2}
\end{figure}

Then splitting first along the line $(0,0,1)$, then along the parallel 
segments $[P,X_2]$ and $[-X_2,-P]$ and finally on the segments
$[-X_2,X_1]$ and $[-X_1,X_2]$ (see Figure \ref{f:case2}) we obtain the laminate $\nu_2'$ with barycenter $0$ and supported on
$$
\{X_1,X_2,-X_1,-X_2\}.
$$
Furthermore, a quick calculation shows that 
\begin{align*}
\nu_2'(X_1)=\nu_2'(-X_1)&=\frac{1}{4}\lambda,\\ 
\nu_2'(X_2)=\nu_2'(-X_2)&=\frac{1}{4}(2-\lambda),
\end{align*} 
with $\lambda=\frac{b-a-c}{b-c}$ from above.
In addition we have the laminates $\nu_1$ and $\nu_3$ as before. Our symmetric
laminate this time will be
$$
\nu'=C\left(\frac{1}{\nu_1(-X_1)}\left(1-\frac{\nu_2'(-X_1)}{\nu_2'(-X_2)}\right)\nu_1+
\frac{1}{\nu_2'(-X_2)}\nu_2'+\frac{1}{\nu_3(-X_3)}\nu_3\right),
$$
where again $C>0$ is a normalizing factor (so that $\nu'$ is a probability measure). Since 
$\nu_2'(-X_1)/\nu_2'(-X_2)=\lambda/(2-\lambda)\leq 1$, $\nu'$ is a probability measure. 
Note that 
$$
\nu'(-X_1)=\nu'(-X_2)=\nu'(-X_3)=C,
$$
hence $\nu'$ is symmetric by Lemma \ref{l:symmetric}.
Moreover, for $k=1,2,3$ 
$$
\frac{\nu'(-X_0)}{\nu'(-X_k)}=\left(1-\frac{\nu_2'(-X_1)}{\nu_2'(-X_2)}\right)
\frac{\nu_1(-X_0)}{\nu_1(-X_1)}
+\frac{\nu_3(-X_0)}{\nu_3(-X_3)}
$$
since $\nu_2'(-X_0)=0$. Substituting and using that $b>a+c$ we get
$$
\frac{\nu'(-X_0)}{\nu'(-X_k)}=\frac{2a}{c}+\frac{b+c-a}{a}>
\frac{2a}{c}+\frac{2c}{a}\geq 4.
$$
Therefore the symmetric laminate $\nu'$ we obtain in this case also 
satisfies
$\nu'(-X_0)\geq 3\nu'(-X_k)$.
\end{proof}

\section{Acknowledgement}
This work was done during the visit of the second author at the NumNet Research Group at the E\"otv\"os L\'or\'and University in Budapest as part of the Guest Scientist Visiting Programme of the Hungarian Academy of Sciences. He acknowledges the support of the Hungarian Academy of Sciences and the hospitality of the Institute of Mathematics at the E\"otv\"os L\'or\'and University.


\bibliographystyle{acm}

\end{document}